\newtheorem*{theorem*}{Theorem}
\newtheorem{theorem}{Theorem}[section]
\newtheorem{proposition}[theorem]{Proposition}
\newtheorem{lemma}[theorem]{Lemma}
\newtheorem{corollary}[theorem]{Corollary}
\newtheorem{definition}[theorem]{Definition}
\newtheorem*{lemma*}{Lemma}
\newtheorem*{proposition*}{Proposition}
\newtheorem*{corollary*}{Corollary}
\numberwithin{equation}{section}
\begin{document}

\title{Convergence of Yang-Mills-Higgs flow for twist Higgs pairs on Riemann surfaces}

\author{Wei Zhang}


\thanks{Mathematics Classification Primary(2000): Primary 58E15, Secondary 53C07.\\
\indent The author is supported by the NSFC No. 11101393 and Fundamental Research Funds for the Central Universities of China WK0010000008.\\
\indent Keywords: twist Higgs bundle, Yang-Mills-Higgs,
Harder-Narasimhan-Seshadri filtration, Chern-Weil formula}
\maketitle
\begin{abstract}

We consider the gradient flow of the Yang-Mills-Higgs functional of
twist Higgs pairs on a Hermitian vector bundle $(E,H_0)$ over a
Riemann surface $X$. It is already known the gradient flow with
initial data $(A_0,\phi_0)$ converges to a critical point
$(A_\infty, \phi_\infty)$ of this functional. Using a modified
Chern-Weil type inequality, we prove that the limiting twist Higgs
bundle $(E, d_{A_\infty}'', \phi_\infty)$ is given by the graded
twist Higgs bundle defined by the Harder-Narasimhan-Seshadri
filtration of the initial twist Higgs bundle $(E,d_{A_0}'',\phi_0)$,
generalizing Wilkin's results for untwist Higgs bundle.

\end{abstract}

\section{Introduction}

Higgs bundle originates from Hitchin's reduction of self-dual
equation on $\mathbb{R}^4$ to Riemann surface(cf. \cite{Hit87self}),
constituted by a holomorphic vector bundle $E \rightarrow X$, and a
holomorphic (1,0)-form $\phi$ taking value in $End(E)$. If the base
manifold $X$ is a smooth Riemann surface, it is equivalent to say
$\phi \in H^0(X, End(E)\otimes K)$, where $K$ is the canonical line
bundle of $X$. This suggests us that $K$ can be replaced by any line
bundle. The definition of twist Higgs bundle follows

\begin{definition}
A twist Higgs bundle is a pair $(E,\phi)$ where $E$ is a rank $n$
holomorphic vector bundle over a complex manifold $X$, $\phi \in
\Omega^{1,0}(End(E)\otimes L)$ is the holomorphic Higgs field
twisted with line bundle $L$, where $L$ is any fixed holomorphic
line bundle.
\end{definition}


To emphasis on the holomorphic structure, Higgs bundle also can be
denoted as $(E,\bar{\partial}, \phi)$. If we take $L$ to be the
trivial line bundle, then it becomes the usual Higgs bundle. For
simplicity, in this article, Higgs bundle means twist Higgs bundle,
and we will specify the usual Higgs bundle as untwist Higgs bundle.

Twist Higgs bundles share lots of tributes with the untwist Higgs
bundles. One can define stability on twist Higgs bundle and the
Hitchin-Kobayashi correspondence still holds(cf.
\cite{bradlow1995stable}). The twist Higgs bundle also admit the so
called Harder-Narasimhan-Seshadri(short as HNS) filtration(cf.
\cite{HT03mirror}). While, there are someting different, for
example, the moduli space of twist Higgs bundle is just a coarse
moduli(cf. \cite{nit91moduli}) rather than a fine moduli as the
usual Higgs bundle(cf. \cite{Hit87self}).


Restrict ourselves to the case $X$ is a Riemann surface, fix a
$C^\infty$ complex vector bundle $E$ of rank $n$ with a Hermitian
metric $H$ and a holomorphic line bundle $L$ with Hermitian metric
$h$. Let $\mathcal{A}$ denote the space of connections on $E$
compatible with the metric. Notice that $\mathcal{A}$ is isomorphic
to the space $\mathcal{A}^{0,1}$, the space of holomorphic
structures on $E$. A pair $(A, \phi) \in \mathcal{A} \otimes
\Omega^{1,0}(End(E)\otimes L)$ is called a Higgs pair if
$d_A''\phi=0$ is satisfied, where $d_A''$ is the naturally induced
covariant derivative on $\Omega^{1,0}(End(E)\otimes L)$. Thus each
Higgs pair will endow $E$ a structure of Higgs bundle
$(E,d_A'',\phi)$. In \cite{Wil06morse}, Wilkin studied the
Yang-Mills-Higgs(short as YMH) flow of untwist Higgs pair over
Riemann surface, proved that the flow converges to the graded object
associated to the Harder-Narasimhan-Seshadri filtration. In this
article, we generalize the result to twist Higgs pair. The
convergence of the flow is analogous to Wilkin's case, or can be
viewed as a special case of Yue Wang and Xi Zhang's work(cf.
\cite{WZ11twisted}). So we focus on the asymptotic behavior of the
heat flow for Higgs pairs. Our first main theorem asserts that the
gradient flow preserve the Harder-Narasimhan(short as HN) type. The
key point in the proof is the Chern-Weil formula(cf.
\cite{Sim88constructing}) for subbundle $S$ defined by projection
$\pi$
\begin{equation*}
deg(S)=\frac{1}{2\pi}\int_X(Tr(\sqrt{-1}\Lambda F_A \pi)-|d_A'' \pi|^2)dvol.
\end{equation*}
Using this formula, we modify the method in \cite{DW2004} to control
the degree of twist Higgs sub-bundle, and show that the HN type is
nondecreasing along the flow. Following the idea of Atiyah and
Bott(cf. \cite{AB83}), Wilkin employed all the convex invariant
function on the Lie algebra to identify the HN type of the Higgs
bundle. His method is still available for twist Higgs bundle on
Riemann surface, but we take the method of Daskalpoulos and
Wentworth(cf. \cite{DW2004}), using merely a subclass of convex
functional the so called weighted YMH functional, which is easy to
be generalized to the Higgs bundle on K{\"a}hler surface(cf.
\cite{LZ2011}). Combine the non-increasing of the weighted YMH
functional and the non-decreasing of the HN type along the gradient
flow, then use the so-called approximate Hermitian structure to
eliminate the possibility of jumping phenomenon, we get our first
main theorem.

\begin{theorem}\label{th:preserveHN}
Let $(A_t, \phi_t)$ be a smooth solution of the gradient flow on the Hermitian vector bundle $(E,H)$ with initial condition $(A_0, \phi_0)$ and $(A_\infty,\phi_\infty)$
be the limit. Then the Harder-Narasimhan type of $(E,d_{A_\infty}'', \phi_\infty)$ is the same as that of $(E,d_{A_0}'', \phi_0)$.
\end{theorem}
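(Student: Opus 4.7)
The plan is to combine two opposing monotonicity properties along the flow with a non-jumping argument at the limit. I would encode the Harder-Narasimhan type of a Higgs bundle by the vector $\vec\mu \in \mathbb{R}^n$ whose entries are the slopes of the successive quotients in the HNS filtration, each repeated with multiplicity equal to the rank of the corresponding subquotient and arranged in decreasing order. Equality of HN types then amounts to equality of these vectors, and both monotonicity statements below will be phrased in the natural dominance order on such vectors.

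The first monotonicity is that $\vec\mu(A_t,\phi_t)$ is nondecreasing in $t$. I would prove this by freezing the HNS filtration of the initial twist Higgs bundle $(E,d_{A_0}'',\phi_0)$ and transporting the orthogonal projections $\pi_i$ as test sections along the flow. Applied to each such $\pi$, the twisted Chern-Weil formula quoted in the introduction, enhanced by the nonnegative Higgs correction coming from $\phi \in \Omega^{1,0}(\operatorname{End}(E)\otimes L)$ failing to preserve $S$ pointwise, takes the form
\[
\deg(S) \le \frac{1}{2\pi}\int_X \operatorname{Tr}(\sqrt{-1}\Lambda F_A\,\pi)\,dvol - \frac{1}{2\pi}\int_X \bigl(|d_A''\pi|^2 + |(1-\pi)\phi\pi|^2\bigr)\,dvol,
\]
where the pointwise norm is taken with respect to $H$ and $h$. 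Combined with the YMH gradient flow equations, this inequality, evaluated on the transported $\pi_i$, shows by the Daskalopoulos--Wentworth comparison that the sub-Higgs-sheaves induced at time $t$ have degrees at least those at time $0$, hence $\vec\mu(A_t,\phi_t)$ dominates $\vec\mu(A_0,\phi_0)$ for every $t\ge 0$.

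The opposite monotonicity comes from the family of weighted YMH functionals $\mathrm{YMH}_\Phi(A,\phi) = \int_X \Phi\bigl(\sqrt{-1}\Lambda F_A + [\phi,\phi^{*}]\bigr)\,dvol$ for the subclass of convex invariant $\Phi$ on $\mathfrak{u}(n)$ used by Daskalopoulos--Wentworth. A direct differentiation along the flow, parallel to the untwisted case but carrying the $L$-factor on the Higgs commutator, shows that each such functional is nonincreasing. Passing to the limit yields $\mathrm{YMH}_\Phi(A_\infty,\phi_\infty)\le \mathrm{YMH}_\Phi(A_0,\phi_0)$ for every admissible $\Phi$, which by the usual duality between this class of convex functionals and the dominance order translates into $\vec\mu(A_\infty,\phi_\infty)$ being dominated by $\vec\mu(A_0,\phi_0)$, provided the HN type at the limit is read off from a genuine holomorphic splitting.

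The hard part will be passing the first inequality to $t=\infty$: the HNS filtration of the $C^0$-limit need not be the $C^0$-limit of the HNS filtrations at finite time, so in principle $\vec\mu$ could jump strictly upward at $t=\infty$ and defeat the sandwich. I would close this gap via the approximate Hermitian structure on $(A_\infty,\phi_\infty)$: as a YMH critical point, the limiting twist Higgs bundle splits holomorphically and orthogonally along its HNS filtration, so its HN type is realized by an honest smooth decomposition whose weighted YMH values can be compared directly with those of smoothed approximations of the initial filtration transported to $t=\infty$. This rules out any strict jump and pins both inequalities together, forcing $\vec\mu(A_\infty,\phi_\infty)=\vec\mu(A_0,\phi_0)$ and hence equality of HN types.
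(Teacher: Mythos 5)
Your overall skeleton --- HN type nondecreasing via the twisted Chern--Weil formula, weighted YMH functionals nonincreasing along the flow, and an approximate critical Hermitian structure to close the sandwich --- is exactly the paper's strategy, but the closing step as you describe it has a genuine gap. The deduction ``$\mathrm{YMH}_\Phi(A_\infty,\phi_\infty)\le \mathrm{YMH}_\Phi(A_0,\phi_0)$ for all admissible convex $\Phi$, hence $\vec{\lambda}_\infty$ is dominated by $\vec{\mu}_0$'' does not follow: the Hardy--Littlewood--P\'olya type duality compares $\mathrm{YMH}_\Phi(\vec{\lambda}_\infty)$ with $\mathrm{YMH}_\Phi(\vec{\mu}_0)$, but for a general initial metric $\mathrm{YMH}_\Phi(A_0,\phi_0)$ is strictly \emph{larger} than $\mathrm{YMH}_\Phi(\vec{\mu}_0)$ (the HN type is only a lower bound for the functional), so your chain of inequalities never reaches $\mathrm{YMH}_\Phi(\vec{\mu}_0)$. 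Relatedly, the approximate critical Hermitian structure must be constructed on the \emph{initial} Higgs bundle $(E,d_{A_0}'',\phi_0)$, not read off from the holomorphic splitting of the limit: its role is to arrange $\mathrm{YMH}_{\alpha,N}(A_0,\phi_0)\le \mathrm{YMH}_{\alpha,N}(\vec{\mu}_0)+\delta_0$, where $2\delta_0$ is the gap to the next attainable value of $\mathrm{YMH}_{\alpha,N}$ on the discrete set of HN types; only then does the sandwich $\mathrm{YMH}_{\alpha,N}(\vec{\mu}_0)\le \mathrm{YMH}_{\alpha,N}(\vec{\lambda}_\infty)\le \mathrm{YMH}_{\alpha,N}(A_0,\phi_0)\le \mathrm{YMH}_{\alpha,N}(\vec{\mu}_0)+\delta_0$ force equality. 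The splitting of the critical limit only tells you that $\mathrm{YMH}_{\alpha,N}(A_\infty,\phi_\infty)=\mathrm{YMH}_{\alpha,N}(\vec{\lambda}_\infty)$; it provides no comparison with $\vec{\mu}_0$.

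Even after repairing this, you would only have proved the theorem for the special approximately critical initial metric, whereas the statement concerns an arbitrary fixed metric $H$, and the limit of the flow a priori depends on the initial metric. The paper bridges this with an open-closed-connectedness argument on the space of smooth Hermitian metrics: the set $\mathcal{H}_\delta$ of metrics for which $\mathrm{YMH}_{\alpha,N}(A_t^H,\phi_t)$ eventually drops below $\mathrm{YMH}_{\alpha,N}(\vec{\mu}_0)+\delta$ is nonempty (by the approximate structure), open (continuous dependence on initial data in finite time), and closed (identifying the limits of nearby flows along a subsequence via the Higgs version of Donaldson's functional). Your proposal contains no mechanism for transferring the conclusion from the good initial metric to the given one, and without such a step the argument does not prove the theorem as stated.
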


Following Donaldson\cite{Don85anti}, we constructs a nontrivial
holomorphic map $(E,d_{A_0}'', \phi_0) \rightarrow
(E,d_{A_\infty}'', \phi_\infty)$. With such a map in hand, one may
then apply the basic principle that a nontrivial holomorphic map
between stable bundles of the same rank and degree must be an
isomorphism. Denote the graded object associated to the HNS
filtration by $Gr^{hns} (E,d_{A_0}'', \phi_0)$, there is the second
main theorem

\begin{theorem}\label{th:convergeGR}
The Higgs bundle $(E,d_{A_\infty}'', \phi_\infty)$ is
holomorphically isomorphic to the graded object $Gr^{hns}
(E,d_{A_0}'', \phi_0)$.
\end{theorem}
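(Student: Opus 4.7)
The plan is to follow the Donaldson--Wilkin strategy: first produce a nontrivial holomorphic map between the initial and limiting twist Higgs bundles, and then use the slope identities coming from Theorem~\ref{th:preserveHN} to force this map to respect the Harder--Narasimhan--Seshadri filtrations and induce isomorphisms on graded pieces.

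First, I would set up the Donaldson-type construction. The flow $(A_t,\phi_t)$ is generated, via integration of the infinitesimal gauge action, by a family of complex gauge transformations $g_t$ such that $d_{A_t}'' = g_t^{-1}\circ d_{A_0}''\circ g_t$ and $\phi_t = g_t^{-1}\phi_0\, g_t$. In general $g_t$ may degenerate as $t\to\infty$, so the critical step is to normalize: set $f_t = g_t/\|g_t\|_{L^2}$, so $\|f_t\|_{L^2}=1$. Using the smooth convergence $(A_t,\phi_t)\to(A_\infty,\phi_\infty)$ together with an elliptic estimate applied to the identity $d_{A_\infty}''\circ f_t - f_t\circ d_{A_0}''\to 0$, one gets uniform $L^p_1$ bounds on $f_t$; passing to a weak limit and invoking elliptic regularity yields a nonzero $f_\infty$ satisfying $d_{A_\infty}''\circ f_\infty = f_\infty\circ d_{A_0}''$ and $\phi_\infty\circ f_\infty = f_\infty\circ\phi_0$, i.e.\ a nonzero morphism of twist Higgs bundles.

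Next, Theorem~\ref{th:preserveHN} guarantees that the HN types of initial and limit agree, and because $(A_\infty,\phi_\infty)$ is a critical point of the YMH functional, a standard decomposition argument shows $(E,d_{A_\infty}'',\phi_\infty)$ splits holomorphically as $\bigoplus_i Q_i^\infty$, where each $Q_i^\infty$ is a polystable twist Higgs bundle whose slope and rank equal those of the $i$-th HN quotient $Q_i = E_i/E_{i-1}$ of $(E,d_{A_0}'',\phi_0)$. I would then argue by induction on the length $\ell$ of the HN filtration. The image of $f_\infty|_{E_1}$ is a twist Higgs subsheaf of the limit. Since $Q_1$ is semistable of maximal slope $\mu_1$ and every summand $Q_j^\infty$ with $j>1$ has strictly smaller slope $\mu_j<\mu_1$, no nonzero Higgs morphism $Q_1\to Q_j^\infty$ can exist; hence $f_\infty$ restricts to a nonzero Higgs morphism $Q_1\to Q_1^\infty$. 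Because ranks and degrees match and a nonzero morphism between polystable twist Higgs bundles of the same slope and rank is an isomorphism, we obtain $Q_1\simeq Q_1^\infty$. Passing to the quotient by this common direct summand, the HN length decreases by one, and iterating over the HNS refinement delivers the desired isomorphism $Gr^{hns}(E,d_{A_0}'',\phi_0)\simeq (E,d_{A_\infty}'',\phi_\infty)$.

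The main obstacle is the first step: constructing $f_\infty$ and proving it is \emph{nonzero}. Uniform bounds on the rescaled $f_t$ require a Bochner-type estimate for $d_{A_t}''$ combined with the smooth convergence of $(A_t,\phi_t)$, and the normalization $\|f_t\|_{L^2}=1$ is what rules out concentration, so that the weak limit does not vanish identically. The twist by the auxiliary line bundle $L$ introduces extra curvature terms compared with the untwist setting of \cite{Wil06morse}, but since $X$ is a Riemann surface and $L$ is fixed with a smooth background metric, these appear as zeroth-order perturbations in the relevant $\bar{\partial}$-Laplacian estimates and are absorbed without affecting the overall strategy.
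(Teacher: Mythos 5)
Your overall strategy (Donaldson-type limit map plus slope bookkeeping from Theorem~\ref{th:preserveHN}) is the right one, but two steps as written do not go through. First, the assertion that ``a nonzero morphism between polystable twist Higgs bundles of the same slope and rank is an isomorphism'' is false: for $\mathcal{O}\oplus\mathcal{O}\to\mathcal{O}\oplus\mathcal{O}$, $(s,t)\mapsto(s,0)$ is a nonzero morphism of polystable bundles of equal slope and rank that is not an isomorphism. The principle you need holds only for \emph{stable} source (a nonzero Higgs morphism from a stable to a semistable Higgs bundle of the same slope is injective, the analogue of (V.7.11) in Kobayashi, which is the lemma the paper actually invokes). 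Relatedly, your inductive step claims $Q_1\simeq Q_1^\infty$, but $Q_1=E_1$ is only semistable while $Q_1^\infty$ is polystable; when $Q_1$ is strictly semistable and not polystable these cannot be isomorphic --- only the Seshadri graded object of $Q_1$ is. This is precisely why the paper refines to the Harder--Narasimhan--Seshadri filtration and runs the argument one \emph{stable} piece $S$ at a time, rather than on the HN quotients.

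Second, normalizing the global gauge transformation by $\|g_t\|_{L^2}=1$ guarantees only that $f_\infty\neq 0$ somewhere on $E$; it does not prevent $f_\infty$ from vanishing identically on $E_1$ (or on the stable piece $S$ relevant at a given stage of the induction), and your slope argument only constrains where a \emph{nonzero} restriction can land. The paper sidesteps this by restricting first and limiting second: it sets $f_j=g_j\circ f_0$ with $f_0\colon S\hookrightarrow E$ the inclusion of the first HNS piece, so the normalization and the nonvanishing of the limit are arranged for the map out of $S$ itself. The paper also needs Proposition~\ref{pr:limitHNfiltration} (convergence of the HN projections, proved via the Chern--Weil identity $\deg(S)=\frac{1}{2\pi}\int_X \mathrm{Tr}(\sqrt{-1}\Lambda\Theta\,\pi)\,dvol-\frac{1}{2\pi}\|d_A''\pi\|^2-\frac{1}{2\pi}\|[\phi,\pi]\|^2$) to place $S_\infty$ inside the maximal destabilizing subbundle of the limit and conclude it is semistable before applying the injectivity lemma; your proposal replaces this with the (legitimate) observation that projections to lower-slope summands of the split limit must vanish, which is a reasonable simplification, but it cannot rescue the two gaps above.
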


One may make further investigation on the moduli space of twist
Higgs bundle over Riemann surface, getting the stratification
structure according to the HN type. The discussion is totally
parallel with the untwist case in \cite{Wil06morse}. Recently, there
are several excellent works about the convergence of the Yang-Mills
flow(cf.
\cite{Sibley2012YangMills},\cite{CollinsJacob2012YangMills}) on
manifold with dimension greater than two. We hope that our results
on twist Higgs bundle could be generalized to higher dimension.


This article is organized as follows. In section 2, we collect some
preliminary material about Higgs bundle, such as the Hitchin's
equation, stability, Hitchin-Kobayashi correspondence, HNS
filtration, the Yang-Mills-Higgs functional of Higgs pair and its
gradient flow. In section 3, we focus on the Harder-Narasimhan type
of the limit of the gradient flow. Using tools like weighted YMH
functional and approximate critical Hermitian structure, we prove
the first main theorem. Finally, in section 4, we get the proof for
our second main theorem.

\section{Preliminary}

\subsection{Higgs bundle}

Fix a Hermitian metric $h$ on the holomorphic line bundle $L$ once
for all, there is a unique Chern connection compatible with $h$ and
the holomorphic structure.

Endow Hermitian metric $H$ on $E$, denote $A=(\bar{\partial},H)$ the
connection 1-form of Chern connection respect to $H$ s.t.
$d^{\prime\prime}_A=\bar{\partial}$, and $F_{(\bar{\partial},H)}$
the curvature two form. Combining the fixed connection on $L$, there
is induced connection on $End(E)\otimes L$, still denoted as $d_A$.
If $\phi=\Phi\mathrm{d}z \otimes s$, $\Phi \in \Gamma(End(E))$, $s
\in \Gamma(L)$, then $\phi^{*H}$ is set to be $\Phi^{*H}
\mathrm{d}\bar{z} \otimes h(s)$, here $\Phi^{*H}$ is the adjoint of
$\Phi$ under $H$, $h(s)$ is the section of $L^*$ defined by $h(s,\
)$. Moreover, we define $[\phi, \phi^{*H}] \in End(E)$ as the Lie
bracket extended to $End(E)\otimes L$ and $End(E)\otimes L^*$ valued
1-form, means $\phi\phi^{*H}+\phi^{*H}\phi$, where the contraction
of $L$ with $L^*$ is taken place at the same time.

There is the so called Hitchin's equation on the Hermitian metric
$H$ over twist Higgs bundle $(E, \bar{\partial}, \phi)$.
\begin{equation}\label{eq:hitchinMetric}
\sqrt{-1}\Lambda_\omega(F_{(\bar{\partial},H)}+[\phi, \phi^{*H}])=\mu Id_E
\end{equation}
where $\omega$ is any fixed K{\"a}hler form(the equation is
conformal invariant, independent on the metric on $X$, so sometimes
we neglect the subscript $\omega$). For $[\phi, \phi^{*H}]$ is
always traceless, $\mu=\frac{deg E}{rank E}$ is the slope of the
vector bundle.

A natural question is when a Higgs bundle admits a solution of
Hitchin's system. More preciously, for a fixed holomorphic bundle
$(E,\bar{\partial})$ and holomorphic $\phi \in
\Omega^{1,0}(End(E)\otimes L)$, find a Hermitian metric $H$, s.t.
the Chern connection $(\bar{\partial},H)$ and $\phi$, $\phi^{*H}$
satisfying the equation. This turns out to closely relate to the so
called stability. The Hitchin-Kobayashi correspondence is totally
parallel with the untwist situation,

\begin{definition}
A Higgs bundle $(E,\bar{\partial},\phi)$ is called stable(semi-stable) if for any $\phi$ invariant\footnote{means that $\phi(F) \subset F\otimes L \otimes K$} holomorphic subbundle $F$, $\frac{deg F}{rank F} <(\leq) \frac{deg E}{rank E}$. Moreover, the Higgs bundle is called polystable if $(E,\phi)=(E_1,\phi_1)\oplus\cdots\oplus (E_r,\phi_r)$, here $\phi_i \in \Omega^{1,0}(End(E_i)\otimes L)$ and $(E_i,\phi_i)$ are all stable and with same slope.
\end{definition}

and

\begin{theorem}[\cite{Hit87self},\cite{Sim88constructing}]
A Higgs bundle $(E,\bar{\partial},\phi)$ admit a Hermitian metric $H$ satisfying the Hitchin equation if and only if it is polystable.
\end{theorem}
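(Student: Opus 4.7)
The plan is to establish the two directions separately. For the ``only if'' direction, suppose $H$ solves equation (\ref{eq:hitchinMetric}), and let $F \subset E$ be any $\phi$-invariant holomorphic subbundle with orthogonal $H$-projection $\pi$. Starting from the Chern-Weil formula
\begin{equation*}
deg(F) = \frac{1}{2\pi}\int_X \bigl(Tr(\sqrt{-1}\Lambda F_A \pi) - |d_A''\pi|^2\bigr)\, dvol
\end{equation*}
quoted in the introduction, I would substitute $\sqrt{-1}\Lambda F_A = \mu\, Id_E - \sqrt{-1}\Lambda[\phi,\phi^{*H}]$ from the Hitchin equation. A direct matrix computation in a frame adapted to $F \oplus F^{\perp}$, using $\phi$-invariance $(Id_E-\pi)\phi\pi = 0$, shows $Tr(\sqrt{-1}\Lambda[\phi,\phi^{*H}]\pi) \geq 0$ pointwise, and produces $deg(F)/rank(F) \leq \mu$. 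Equality forces $d_A''\pi = 0$ together with $\phi^{*H}$ also preserving $F$, hence a parallel $\phi$-invariant orthogonal splitting of $E$. Induction on rank then produces the polystable decomposition.

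For the ``if'' direction, the method of choice is Donaldson's heat flow on the space of Hermitian metrics. Starting from an arbitrary background $H_0$, evolve
\begin{equation*}
H_t^{-1}\partial_t H_t = -2\bigl(\sqrt{-1}\Lambda(F_{(\bar{\partial},H_t)} + [\phi, \phi^{*H_t}]) - \mu\, Id_E\bigr).
\end{equation*}
Short-time existence is standard parabolic theory, and long-time existence follows from a maximum principle argument on $Tr(H_0^{-1}H_t) + Tr(H_t^{-1}H_0)$. I would then introduce Donaldson's functional $M(H_0, H_t)$ whose first variation is the Yang-Mills-Higgs moment map; the pointwise computation is insensitive to the twist line bundle $L$, so $M$ remains non-increasing along the flow and convex along geodesics in the space of Hermitian metrics. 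Combined with a suitable integral bound in the stable case, convergence of the flow to a smooth solution of (\ref{eq:hitchinMetric}) is then a standard consequence.

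The main obstacle is Simpson's $C^0$-estimate: when $(E,\bar{\partial},\phi)$ is stable, one must show that $\|\log(H_0^{-1}H_t)\|_{C^0}$ stays uniformly bounded along the flow, equivalently that $M$ is coercive on a suitable slice. The standard Uhlenbeck-Yau/Simpson argument proceeds by contradiction: assuming blow-up, one rescales $\log(H_0^{-1}H_t)$, extracts a nontrivial weak limit, and applies the regularity theorem for weakly holomorphic subbundles to produce a $\phi$-invariant destabilizing subsheaf, contradicting stability. The twist causes only bookkeeping changes, since $\phi$-invariance of the limiting subsheaf uses only the fixed data $L$ and $\phi \in \Omega^{1,0}(End(E)\otimes L)$. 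Once the stable case is handled, polystability reduces to it by splitting $(E,\phi)$ into stable summands of equal slope, solving on each, and taking the orthogonal direct sum to obtain the desired metric on $E$.
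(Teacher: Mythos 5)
The paper does not prove this theorem; it simply cites \cite{Hit87self}, \cite{Sim88constructing} for the untwisted case and \cite{bradlow1995stable} for the twisted case. Your outline is exactly the standard argument of those references --- Chern--Weil plus the identity $Tr([\phi,\phi^{*H}]\pi)=|[\phi,\pi]|^2$ (the paper's own algebraic lemma) for the ``only if'' direction, and the Donaldson--Simpson heat flow with the $C^0$-estimate via weakly holomorphic destabilizing subsheaves for the ``if'' direction --- so it is correct as a sketch and takes the same route as the cited proofs.
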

Readers can refer to \cite{Hit87self},\cite{Sim88constructing} for the proof of the untwist case, and \cite{bradlow1995stable} for the twist case.

If we restrict ourselves to the untwist Higgs bundle, consider the
new $GL(n,\mathbb{C})$ connection
$\nabla_{(\bar{\partial},\phi,H)}=d_A+\phi+\phi^{*H}$, denote its
curvature  by $R_{(\bar{\partial},\phi,H)}$ to distinguish with
$F_{(\bar{\partial},H)}$. Although
$\nabla_{(\bar{\partial},\phi,H)}$ is not a metric connection to
$H$,
\begin{equation*}
\begin{split}
R_{(\bar{\partial},\phi,H)}=&\nabla_{\bar{\partial},\phi,H}^2=d_A^2+d_A(\phi+\phi^{*H})+[\phi, \phi^{*H}]\\
=&F_{(\bar{\partial},H)}+[\phi, \phi^{*H}]+d^{\prime\prime}_A
\phi+d^\prime_A \phi^{*H}.
\end{split}
\end{equation*}
This means that if $H$ is a solution to Hitchin's equation of the
untwist Higgs bundle, then the traceless part of the curvature
satisfies $R_{(\bar{\partial},\phi,H)}^\bot=0$.

In the twist case, we can not produce the non-unitary connection
from the Higgs data, but the behavior of
$F_{(\bar{\partial},H)}+[\phi, \phi^{*H}]$ is somehow similar to
$R_{(\bar{\partial},\phi,H)}$, so we denote
$F_{(\bar{\partial},H)}+[\phi, \phi^{*H}]$ as
$\Theta_{(\bar{\partial},\phi,H)}$ for the twist Higgs bundle to
prevent ambiguity. Sometimes we will abbreviate some subscripts as
$\Theta_H$ if only the metric $H$ varies.

\subsection{Harder-Narasimhan and Seshadri filtrations}

Given arbitrary twist Higgs bundle $(E, \bar{\partial}, \phi)$, it
may not be stable or semitable, but we have

\begin{lemma}
Let $(E, \bar{\partial}, \phi)$ be a twist Higgs bundle, then there
is a unique filtration, called Harder-Narasimhan filtration of $E$
by $\phi$-invariant holomorphic subbundles $0=E_0 \subset E_1
\subset \cdots \subset E_r=E$, s.t. $F_i=E_i/E_{i-1}$ is Higgs
semistable with respect to the quotient Higgs field $\tilde{\phi}_i
\in \Omega^{1,0}(End(F_i)\otimes L)$ for all $i$ and $\mu(F_1)>
\cdots
>\mu(F_r)$.
\end{lemma}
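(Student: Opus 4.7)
The plan is to follow the standard Harder--Narasimhan construction: iteratively extract a maximal destabilizing $\phi$-invariant subbundle and induct on rank. The twist by $L$ enters only through the definition of $\phi$-invariance (namely $\phi(F)\subset F\otimes L\otimes K$), and because $L$ and $K$ are fixed line bundles, every sheaf-theoretic operation one needs (saturation, sum, intersection, quotient) remains compatible with this condition. Consequently the argument runs almost word-for-word parallel to the untwist Higgs case treated in \cite{Wil06morse}.

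First, I would establish the existence of a maximal destabilizer. Consider the collection of $\phi$-invariant coherent subsheaves $F \subset E$ of positive rank. On a Riemann surface every such $F$ has a locally free saturation, and saturation preserves $\phi$-invariance since it modifies $F$ only on a finite set. The slopes of coherent subsheaves of a fixed vector bundle on a smooth projective curve are bounded above, so
\[
\mu_{\max} := \sup\{\mu(F) : 0 \neq F \subsetneq E \text{ is a $\phi$-invariant subbundle}\}
\]
is finite. I would then pick $E_1$ to be a $\phi$-invariant subbundle realizing $\mu_{\max}$ and of maximal rank among such maximizers. The key properties of $E_1$ are: it is saturated (its saturation is also $\phi$-invariant of the same rank and at least the same degree, so equals $E_1$ by maximality); it is Higgs semistable (any $\phi$-invariant $F \subset E_1$ is also $\phi$-invariant in $E$, so $\mu(F) \leq \mu_{\max} = \mu(E_1)$); and it is unique (if $E_1'$ were another maximal destabilizer, then $E_1\cap E_1'$ and $E_1+E_1'$ are both $\phi$-invariant, and the exact sequence $0 \to E_1\cap E_1' \to E_1 \oplus E_1' \to E_1 + E_1' \to 0$ combined with the semistability of $E_1$ and $E_1'$ forces $\mu(E_1+E_1')\geq \mu_{\max}$ while $\mathrm{rank}(E_1+E_1') > \mathrm{rank}(E_1)$, contradicting the maximal-rank choice of $E_1$).

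Next I would induct on $\mathrm{rank}(E)$. The quotient $E/E_1$ inherits a Higgs field $\tilde\phi\in \Omega^{1,0}(\mathrm{End}(E/E_1)\otimes L)$ because $\phi$ preserves $E_1$ up to the fixed twist, so by the inductive hypothesis $(E/E_1,\tilde\phi)$ admits its own HN filtration $0=\bar{E}_0 \subset \bar{E}_1\subset \cdots \subset \bar{E}_{r-1}=E/E_1$. Pulling back via the projection $E\to E/E_1$ yields the desired chain $E_1\subset E_2\subset\cdots\subset E_r=E$ with Higgs semistable successive quotients $F_i$. The strict inequality $\mu(F_1)>\mu(F_2)$ follows because $\mu(F_2)\geq \mu(F_1)=\mu_{\max}$ would give $\mu(E_2)\geq \mu_{\max}$ with $\mathrm{rank}(E_2)>\mathrm{rank}(E_1)$, contradicting the maximal-rank choice of $E_1$; the remaining strict inequalities come from the inductive hypothesis, and uniqueness of the full filtration follows from uniqueness of $E_1$ at each stage.

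The only thing beyond routine adaptation is checking that the sheaf-theoretic operations above all preserve $\phi$-invariance. For instance, given $\phi(F)\subset F\otimes L\otimes K$ and $\phi(F')\subset F'\otimes L\otimes K$, one must verify that $\phi(F+F')\subset (F+F')\otimes L\otimes K$, and likewise for intersections, saturations, and quotients. These follow from the exactness of tensoring with the fixed line bundles $L$ and $K$, but they do have to be stated explicitly, and this is the sole place where the twist bookkeeping genuinely differs from the untwist case. I expect no deeper obstacle, since the filtration is intrinsic to the abelian category of $\phi$-twisted coherent subsheaves of $(E,\phi)$.
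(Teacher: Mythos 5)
The paper states this lemma without proof, treating it as a known preliminary (the introduction cites \cite{HT03mirror} for the existence of HN and HNS filtrations of twist Higgs bundles), so there is no in-paper argument to compare against; your proposal supplies the standard Harder--Narasimhan construction, correctly adapted to the twisted setting, and it is sound. The only places to tighten are routine: note that $\mu_{\max}$ is actually attained (degrees of saturated $\phi$-invariant subsheaves are integers bounded above and ranks are bounded, so the set of slopes is discrete and bounded), and in the uniqueness step pass to the saturation of $E_1+E_1'$ (which preserves rank and $\phi$-invariance and does not decrease degree) and observe that $\mathrm{rank}(E_1+E_1')>\mathrm{rank}(E_1)$ because $E_1$ is saturated and $E_1'\not\subset E_1$.
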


And the $n$-tuple $\overrightarrow{\mu}=(\overbrace{\mu_1, \cdots,
\mu_1}^{rank E_1}; \overbrace{\mu_2, \cdots, \mu_2}^{rank E_2}
\cdots \overbrace{\mu_r, \cdots, \mu_r}^{rank E_r})$ is called the
type of the HN filtration. Similarly,

\begin{lemma}
Let $(E, \bar{\partial}, \phi)$ be a semi-stable Higgs bundle, then there is a filtration, called Seshadri filtration of $E$ by $\phi$-invariant holomorphic subbundles $0=E_0 \subset E_1 \subset \cdots \subset E_s=E$, s.t. $Q_i=E_i/E_{i-1}$ is Higgs stable for all $i$.
\end{lemma}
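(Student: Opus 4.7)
The plan is to proceed by induction on $\mathrm{rank}(E)$. If $E$ is already Higgs stable (in particular the case $\mathrm{rank}(E)=1$), then the trivial filtration $0\subset E$ suffices. Otherwise, since $E$ is only semi-stable, the collection of nonzero proper $\phi$-invariant holomorphic subbundles $F\subset E$ with $\mu(F)=\mu(E)$ is nonempty, and I would choose $E_1$ in this collection to be of minimal rank. To verify that $E_1$ is Higgs stable, let $G\subset E_1$ be a nonzero proper $\phi$-invariant holomorphic subbundle; then $G$ is also $\phi$-invariant as a subbundle of $E$, so semi-stability of $E$ gives $\mu(G)\le\mu(E)=\mu(E_1)$, and equality would produce a $\phi$-invariant subbundle of $E$ with smaller rank than $E_1$ but the same slope, contradicting the minimality of $E_1$.

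The next step is to show that the quotient $\bar E:=E/E_1$, equipped with the induced Higgs field $\bar\phi$ (well-defined because $\phi(E_1)\subset E_1\otimes L$), is again semi-stable with $\mu(\bar E)=\mu(E)$. Slope additivity in the short exact sequence $0\to E_1\to E\to\bar E\to 0$ together with $\mu(E_1)=\mu(E)$ gives $\mu(\bar E)=\mu(E)$. For semi-stability, given any $\bar\phi$-invariant holomorphic subbundle $\bar H\subset\bar E$, its preimage $H':=\pi^{-1}(\bar H)\subset E$ under the quotient $\pi$ is $\phi$-invariant; semi-stability of $E$ forces $\mu(H')\le\mu(E)$, and since $\mu(H')$ is the weighted average of $\mu(E_1)=\mu(E)$ and $\mu(\bar H)$, one concludes $\mu(\bar H)\le\mu(E)=\mu(\bar E)$. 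The induction hypothesis then supplies a Seshadri filtration $0=\bar E_0\subset\bar E_1\subset\cdots\subset\bar E_{s-1}=\bar E$ whose successive quotients are Higgs stable, and pulling back by $\pi$ and prepending $E_1$ yields the desired filtration $0\subset E_1\subset\pi^{-1}(\bar E_1)\subset\cdots\subset E$.

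The one subtlety that I anticipate as the main obstacle is purely sheaf-theoretic: one must ensure that all the $\phi$-invariant subobjects produced (such as a minimal-rank $E_1$, the subbundle $\bar H$, or its preimage $H'$) are honest holomorphic subbundles rather than merely coherent subsheaves that may carry torsion on the quotient side. On a Riemann surface this is automatic because every torsion-free coherent sheaf on a smooth complex curve is locally free, so passing to the saturation of a subsheaf alters neither its rank nor its degree, and the $\phi$-invariance is preserved in the process. This is where the Riemann-surface hypothesis is essentially used, and it is the reason the lemma is stated in this setting rather than in higher dimensions where reflexive saturations and Bogomolov-type corrections would be required.
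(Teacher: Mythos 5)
Your argument is correct, and it is essentially the standard Jordan--H\"older construction; note that the paper itself offers no proof of this lemma at all---it is stated as background in the Preliminary section (the HNS filtration is attributed to the literature, e.g.\ the reference cited for mirror symmetry and Langlands duality), so there is no in-paper proof to compare against. Your induction is sound: when $E$ is semistable but not stable the set of proper nonzero $\phi$-invariant subbundles of slope $\mu(E)$ is nonempty, a minimal-rank member $E_1$ is forced to be stable exactly by the minimality argument you give, the quotient $(E/E_1,\bar\phi)$ is semistable of the same slope by the preimage/weighted-average computation, and the pulled-back filtration has stable graded pieces. The only inaccuracy is in your closing remark: saturation of a coherent subsheaf on a curve preserves the rank but can strictly \emph{increase} the degree (e.g.\ $\mathcal{O}(-1)\hookrightarrow\mathcal{O}$ on $\mathbb{P}^1$ saturates to $\mathcal{O}$), so the claim that it ``alters neither its rank nor its degree'' is false as stated. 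This does not damage the proof, because the paper's definition of (semi)stability is phrased entirely in terms of subbundles and every object you construct ($E_1$, the preimage $\pi^{-1}(\bar H)$ of a subbundle, the quotient $E/E_1$) is already a subbundle; the correct statement of the point you are after is simply that degree only goes up under saturation, so testing slopes against saturated subsheaves, i.e.\ subbundles, suffices on a Riemann surface.
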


Notice that for $Q_i$ may have same slope, Seshadri filtration is
not unique. Combining above two filtrations together, we have

\begin{proposition}
Let $(E, \bar{\partial}, \phi)$ be a Higgs bundle, then there is a
double filtration $\{E_{i,j}\}$ of $E$ called $\phi$-invariant
Harder-Narasimhan-Seshadri filtration, s.t. $\{E_i\}_{i=1}^r$ is the
HN filtration, and $\{E_{i,j}\}_{j=1}^{s_i}$ is a Seshadri
filtration of $E_i/E_{i-1}$.
\end{proposition}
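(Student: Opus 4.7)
The plan is to obtain the double filtration by composing the two preceding lemmas: first apply the Harder--Narasimhan lemma to get semistable Higgs quotients, then refine each quotient via the Seshadri lemma, and finally lift the Seshadri refinements back to subbundles of $E$. No new analytic input is needed beyond the two lemmas already stated.

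First, I would apply the HN lemma to $(E,\bar{\partial},\phi)$ to obtain the unique $\phi$-invariant filtration $0=E_0 \subset E_1 \subset \cdots \subset E_r=E$ with $F_i := E_i/E_{i-1}$ Higgs semistable (with respect to the induced quotient Higgs field $\tilde{\phi}_i \in \Omega^{1,0}(\mathrm{End}(F_i)\otimes L)$) and $\mu(F_1)>\cdots>\mu(F_r)$. Next, for each $i$ apply the Seshadri lemma to the semistable twist Higgs bundle $(F_i,\tilde{\phi}_i)$ to obtain a $\tilde{\phi}_i$-invariant filtration $0=F_{i,0}\subset F_{i,1}\subset \cdots \subset F_{i,s_i}=F_i$ whose successive quotients $Q_{i,j}=F_{i,j}/F_{i,j-1}$ are all Higgs stable of slope equal to $\mu(F_i)$.

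Then I would lift: let $\pi_i\colon E_i \to F_i$ be the natural projection and define
\[
E_{i,j} := \pi_i^{-1}(F_{i,j}), \qquad 0 \le j \le s_i,
\]
so that $E_{i,0}=E_{i-1}$ and $E_{i,s_i}=E_i$. Concatenating these across $i=1,\dots,r$ gives a filtration of $E$ whose ``rows'' recover the HN filtration and whose ``columns'' inside row $i$ induce the chosen Seshadri filtration of $F_i$. The final step is to verify $\phi$-invariance of each $E_{i,j}$: if $e$ is a local section of $E_{i,j}$, then $e \in E_i$, so $\phi(e) \in E_i \otimes L \otimes \Omega^{1,0}$; its image under $\pi_i$ in $F_i\otimes L\otimes \Omega^{1,0}$ equals $\tilde{\phi}_i(\pi_i(e))$, which lies in $F_{i,j}\otimes L\otimes \Omega^{1,0}$ by the Seshadri invariance; hence $\phi(e) \in E_{i,j}\otimes L\otimes \Omega^{1,0}$.

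There is no real obstacle here, since stability, semistability and $\phi$-invariance are preserved under the standard operations of taking subbundles and quotients in the twist Higgs category (exactly as in the untwist case, the only change being the tensor factor $L$, which plays no role in the formal bookkeeping). The only mild subtleties are keeping the indices straight and noting that the Seshadri refinement is not unique, so the resulting HNS filtration inherits the same non-uniqueness; uniqueness is asserted only for the outer (HN) layer, which is the one used in the sequel when one speaks of the graded object $Gr^{hns}(E,\bar{\partial},\phi)$.
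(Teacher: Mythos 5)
Your proposal is correct: the composition of the HN and Seshadri lemmas followed by pulling back $F_{i,j}$ under the quotient map $\pi_i\colon E_i\to E_i/E_{i-1}$ is exactly the standard construction, and your verification of $\phi$-invariance of the lifted subbundles is the only point requiring any care. The paper itself offers no proof of this proposition (it is stated in the preliminaries with a reference), so your argument simply supplies the routine details the paper leaves to the cited literature; your closing remark that the non-uniqueness lives entirely in the Seshadri layer, while the graded object $Gr^{hns}$ is nonetheless well defined up to isomorphism, matches how the paper uses the result.
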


The associated graded object
\begin{equation*}
Gr^{HNS}(E,\bar{\partial},\phi)=\oplus^r_{i=1} \oplus^{s_i}_{j=1} Q_{i,j},
\end{equation*}
where $Q_{i,j}=E_{i,j}/E_{i,j-1}$, is uniquely determined by the
isomorphism class of $(E,\bar{\partial},\phi)$. It is easy to see
that $Gr^{HNS}(E,\bar{\partial},\phi)$ is not gauge equivalent to
$(E,\bar{\partial},\phi)$ except itself is stable. This provides us
an algebraic way to split a Higgs bundle into a direct sum of stable
Higgs bundles.

\subsection{Higgs-Yang-Mills flow}

On the other hand, rather than fixing the Higgs bundle
$(E,\partial,\phi)$ to find the Hermitian Yang-Mills metric $H$, we
fix the $C^\infty$ bundle $E$ and a Hermitian metric $H$ on it, to
find a (integrable)connection compatible with the metric and a $\phi
\in \Omega^{1.0}(End(E)\otimes L)$, such that, firstly, $E$ is a
holomorphic vector bundle\footnote{This always holds on Riemann
surface} with holomorphic structure $d_A''$; Secondly, $\phi$ is
holomorphic under $\bar{\partial}=d_A''$, i.e. a Higgs field, thus
the Higgs pair $(A,\phi)$ makes $(E,d_A'',\phi)$ a Higgs bundle.
Thirdly, $H$ satisfies the Hitchin's equation. Summing up, for fixed
$H$, there is a equation on the Higgs pair $(A,\phi)$
\begin{equation}\label{eq:hitchinPair}
\begin{cases}
F_A+[\phi, \phi^{*H}]=-\sqrt{-1}\mu Id_E \omega\\
d^{\prime\prime}_A \phi=0\\
\end{cases}
\end{equation}
we still call it the Hitchin's equation. In this turn, we denote
$F_A+[\phi, \phi^{*H}]$ as $\Theta_{(A,\phi)}$, and we will omit the
subscript if it does not cause confusion. The solutions of this
equation can be interpreted in Morse theory. Consider the
Yang-Mills-Higgs functional on $(A,\phi)$ restricting to the level
set $d^{\prime\prime}_A \phi=0$
\begin{equation}\label{eq:hymfunc}
\text{YMH}(A,\phi)=||F_A+[\phi,\phi^{*H}]||^2=\int |\Lambda_\omega
(F_A+[\phi,\phi^{*H}])|^2 \mathrm{d} vol,
\end{equation}
the solutions of Hitchin equation is the local minimum, a subclass
of the critical points of this functional. To find all the critical
points, we consider the associated gradient flow
\begin{equation*}
\begin{cases}
\frac{\partial A}{\partial t}=*d_A*(F_A+[\phi,\phi^{*H}])=-d_A^*\Theta\\
\frac{\partial \phi}{\partial t}=*[\phi, *(F_A+[\phi,\phi^{*H}])]=*[\phi,*\Theta]
\end{cases}
\end{equation*}
Since both the holomorphic structure and Hermitian metric are fixed,
the connection $A$ is totally determined by its $(0,1)$ part $A''$,
the first equation is equivalent to
\begin{equation*}
\frac{\partial A''}{\partial t}=*d''_A*(F_A+[\phi,\phi^{*H}])=-d_A''^*\Theta.
\end{equation*}

Before solving this evolution equation system for any initial data,
we should notice that $*$ acting on $1$-form amounts to multiplying
the complex number $i$, and
\begin{equation}\label{eq:gradientflow}
\begin{split}
\frac{\partial d_A''\phi}{\partial t}=&\frac{\partial}{\partial t}(d'' \phi+ [A'',\phi])
=d_A''\frac{\partial \phi}{\partial t}+[\frac{\partial A''}{\partial t}, \phi]\\
=&i[d_A''*\Theta, \phi]+id_A''[\phi,*\Theta]\\
=&i((d_A''\Theta)\phi+\phi d_A''\Theta+d_A''(\phi \Theta)-d_A''(\Theta\phi))\\
=&i((d_A''\phi)\Theta-\Theta(d_A''\phi))=i[d_A''\phi,\Theta]
\end{split}
\end{equation}
Thus the holomorphicity of Higgs field $d_A''\phi=0$ is preserved by
the gradient flow, it makes sense to restrict on the level set
$d_A''\phi=0$ to solve the Cauchy problem of the gradient flow.

To get the existence and convergence properties of this gradient
flow, by Simpson\cite{Sim88constructing}, one fixes $(A_0,\phi_0)$,
letting $H$ change along the following heat equation
\begin{equation}\label{eq:heatflow}
H^{-1}\frac{\partial H}{\partial t}=-i \Lambda \Theta^\bot_H.
\end{equation}
If $H(t)$ is the solution for this equation, then there is a gauge
transformations $g(t)$ determined by $H(t)$, s.t. $(A(t),
\phi(t))=(g(t)\cdot A_0, g(t)\cdot \phi_0)$ will be a solution to
Equation \eqref{eq:gradientflow}(the explicit expression of the
gauge transformation can be found in \cite{Wil06morse}). In the
untwist case, Simpson had proved that solution to Equation
\eqref{eq:heatflow} exists for all time and depends continuously on
the initial condition $H(0)$. The twist case can be viewed as a
special case of \cite{WZ11twisted}. Via the equivalence of above
heat flow and the gradient flow of YMH, Wilkin(cf.
\cite{Wil06morse}) proved the following properties of the solution
to Equation \eqref{eq:gradientflow}(the proof in twist case is
identical).

$\bullet$ Existence for all time and uniqueness.

$\bullet$ Convergence modulo gauge transformation.

$\bullet$ Convergence without gauge transformation.

$\bullet$ Continuous dependence on initial condition for any fixed
$T<\infty$ in the $H^k$ norm, for any $k \in \mathbb{N}$.

If the initial data $(A_0,\phi_0)$ define a stable Higgs bundle
$(E,d_{A_0}'',\phi_0)$, then the limit $(A_\infty, \phi_\infty)$
will satisfy Equation \eqref{eq:hitchinPair}, i.e. there is a gauge
transformation relate $(A_0,\phi_0)$ to $(A_\infty, \phi_\infty)$.
Without any stable assumption on the initial data, $(A_\infty,
\phi_\infty)$ may not be a solution of the Hitchin's equation. There
should be a precise description of the limit.

\begin{proposition}
Let $(A,\phi)$ be a critical point of the YMH functional, then there is an $\phi$-invariant orthogonal splitting $(E, d_A'', \phi)=\oplus^l_{i=1}(E_i, d_{A_i}'', \phi_i)$, s.t.
\begin{equation*}
\sqrt{-1}\Lambda \Theta_i=\mu_i Id_{E_i}
\end{equation*}
where $\Theta_i=F_{A_i}+[\phi_i,\phi_i^{*H}]$ and $\mu_i=\mu(E_i)$.
\end{proposition}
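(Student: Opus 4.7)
The plan is to derive the pointwise algebraic consequences of being a critical point, and then invoke the spectral decomposition of a parallel self-adjoint endomorphism. Setting the right hand sides of the gradient flow to zero yields the Euler-Lagrange equations
$$d_A^*\Theta=0,\qquad [\phi,*\Theta]=0.$$
On a Riemann surface $\Theta$ is a $2$-form, so $*\Theta$ is a section of $End(E)$, namely (up to sign) $\Lambda\Theta$. Setting $u:=\sqrt{-1}\Lambda\Theta_{(A,\phi)}$, the two conditions read $d_A u=0$ and $[\phi,u]=0$. A quick local check shows that $u$ is $H$-self-adjoint, since $-\sqrt{-1}F_A$ and $\sqrt{-1}[\phi,\phi^{*H}]$ are both Hermitian sections of $End(E)$.

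Because $u$ is a $d_A$-parallel self-adjoint section of $End(E)$, its distinct eigenvalues $\mu_1>\cdots>\mu_l$ are locally constant real numbers, and the eigenspaces $E_i:=\ker(u-\mu_i\,Id_E)$ are smooth $C^\infty$ subbundles of $E$, pairwise orthogonal by the spectral theorem for Hermitian operators. Parallelism of $u$ implies that each $E_i$ is preserved by the connection, so $A$ restricts to a unitary connection $A_i$ on $E_i$ and $A=\oplus_i A_i$. The relation $[\phi,u]=0$ then forces $\phi$ to preserve the splitting, hence $\phi=\oplus_i\phi_i$ with $\phi_i\in\Omega^{1,0}(End(E_i)\otimes L)$; the constraint $d_A''\phi=0$ breaks into $d_{A_i}''\phi_i=0$. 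Consequently $\Theta=\oplus_i\Theta_i$ with $\Theta_i=F_{A_i}+[\phi_i,\phi_i^{*H}]$, and the eigenvalue identity becomes exactly $\sqrt{-1}\Lambda\Theta_i=\mu_i\,Id_{E_i}$.

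To identify $\mu_i$ with the slope $\mu(E_i)=deg E_i/rank E_i$, I take trace of $\sqrt{-1}\Lambda\Theta_i=\mu_i\,Id_{E_i}$ and integrate over $X$: in a local frame one has $[\phi_i,\phi_i^{*H}]=[\Phi_i,\Phi_i^{*H}]\,h(s,s)\,dz\wedge d\bar{z}$, so the trace of the twist term vanishes because the trace of a commutator is zero, and the remaining Chern-Weil identity $\int_X\sqrt{-1}\Lambda\,Tr(F_{A_i})\,dvol=2\pi\,deg E_i$ gives $\mu_i=\mu(E_i)$ in the volume normalization implicit in the Hitchin equation of the excerpt. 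The main (and essentially only) obstacle is the bookkeeping for $*$, $\Lambda$, and the twisted bracket on $End(E)\otimes L$-valued forms: it has to be kept consistent enough that the two critical-point equations really do translate into "$u$ is $d_A$-parallel and $\phi$-commuting". Once this algebraic step is clear, everything that follows is the spectral theory of a parallel Hermitian endomorphism together with a Chern-Weil calculation.
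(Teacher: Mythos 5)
Your proposal is correct and follows the same route as the paper's own (sketched) proof: derive the Euler--Lagrange equations $d_A^*\Theta=0$ and $[\phi,*\Theta]=0$, use parallelism of the self-adjoint section $\sqrt{-1}\Lambda\Theta$ to get an orthogonal eigenbundle splitting with constant eigenvalues, and use the second equation for $\phi$-invariance. You simply supply more detail than the paper (which omits the spectral-decomposition bookkeeping and the Chern--Weil identification of $\mu_i$ with the slope), and those additions are sound.
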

We only sketch the proof. The critical points of Equation
\eqref{eq:hymfunc} satisfying the Euler-Lagrange equations $d_A
*\Theta=0$ and $[\phi,
*\Theta]=0$. The first equation implies the eigenvalues of $*\Theta$
are all constant hence inducing a splitting of the vector bundle.
While second equation shows that this splitting is $\phi$-invariant.
So this analytic limit splits into direct sum of polystable Higgs
bundle.

Recall for a twist Higgs bundle $(E,\bar{\partial},\phi)$, there is
a graded Higgs bundle obtained via HNS filtration. Endow any
Hermitian metric $H$ on this twist Higgs bundle, denote the
compatible Chern connection as $A$. Forget the holomorphic structure
on $E$, consider the YMH flow for this twist Higgs pair $(A,\phi)$
on the $C^\infty$ bundle $E$, there is also a split bundle at the
limit. We want to show that this two kinds of splitting coincide.
The proof is divided into two steps. We first show the gradient flow
keep the HN type. Secondly, we show the limit of the gradient flow
must be the graded object defined by the HNS filtration.

\section{Harder-Narasimhan type of the limit}
The solution of the YMH flow in finite time equals to a gauge
transformation, so the jumping phenomenon of the HN type only takes
place at the limit. We try to relate the HN type with the weighted
YMH functionals, and use these functionals to identify the HN type.
\subsection{YMH functional and HN type}
Recall some basic facts about the YMH functional and HN type without
proof.
\begin{proposition}
Let $(A_t,\phi_t)$ be a solution of Equation \eqref{eq:gradientflow}, then
\begin{equation*}
\frac{\partial}{\partial t}|\Lambda \Theta|^2+\triangle_A |\Lambda \Theta|^2 \leq 0.
\end{equation*}
where $\triangle_A$ is the Hodge Laplace of $d_A$. Furthermore,
integrate the above expression, there is $\frac{d}{d
t}||\Theta||^2=-2 ||d_A^* \Theta||^2 \leq 0$, i.e. $t \rightarrow
\text{YMH}(A_t, \phi_t)$ is non-increasing.
\end{proposition}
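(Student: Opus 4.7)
The plan is to compute $\partial_t \Theta$ directly from the flow equations together with the definition $\Theta = F_A + [\phi,\phi^{*H}]$, and then to derive the integrated and pointwise statements in parallel. For the pointwise inequality, I would apply $\Lambda$ and invoke a Bochner--Weitzenb\"ock argument together with the K\"ahler identities, while for the integrated statement I would pair with $\Theta$ and integrate by parts.

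Differentiating $\Theta$ in time and substituting $\dot A = -d_A^*\Theta$ and $\dot\phi = *[\phi,*\Theta]$ yields
\begin{equation*}
\partial_t \Theta \;=\; d_A \dot A + [\dot\phi,\phi^{*H}] + [\phi,(\dot\phi)^{*H}]
\;=\; -d_A d_A^*\Theta + \bigl[*[\phi,*\Theta],\phi^{*H}\bigr] + \bigl[\phi,(*[\phi,*\Theta])^{*H}\bigr].
\end{equation*}
For the integral identity, take the $L^2$ pairing of this with $\Theta$. The first summand integrates by parts to $-\|d_A^*\Theta\|^2$, contributing $-2\|d_A^*\Theta\|^2$ after the factor of two. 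The two Higgs commutator terms, rewritten via $\langle [X,Y],Z\rangle = \langle X,[Z,Y^{*H}]\rangle$-type adjoint identities for $\mathrm{ad}$, fold back into the $A$-flow through the Chern--Weil relation $\Lambda[\phi,\phi^{*H}] = -i[\phi,\phi^{*H}]$ component of $\Lambda\Theta$; on a Riemann surface, the K\"ahler identity $[\Lambda,d_A]=i(d_A^{\prime *}-d_A^{\prime\prime *})$ together with the invariance $d_A''\phi=0$ (preserved by the flow, as noted in \eqref{eq:gradientflow}) shows that the Higgs contribution is already accounted for inside $d_A^*\Theta$, so no extra term survives and we obtain $\frac{d}{dt}\|\Theta\|^2 = -2\|d_A^*\Theta\|^2$.

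For the pointwise inequality, apply $\Lambda$ to $\partial_t\Theta$. Since $\Lambda d_A^*\Theta = 0$ (it is a $1$-form) and using $[\Lambda,d_A]$ together with the Bianchi identity $d_A F_A=0$, one converts $\Lambda d_A d_A^*\Theta$ into $\Delta_A\Lambda\Theta$ modulo zeroth-order commutators $[\phi,\cdot]$, $[\phi^{*H},\cdot]$ acting on $\Lambda\Theta$. The conclusion takes the form
\begin{equation*}
(\partial_t + \Delta_A)\Lambda\Theta \;=\; -i\bigl[\phi,[\phi^{*H},\Lambda\Theta]\bigr] - i\bigl[\phi^{*H},[\phi,\Lambda\Theta]\bigr] \;=\; -i\bigl[[\phi,\phi^{*H}],\Lambda\Theta\bigr],
\end{equation*}
by the Jacobi identity. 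Pairing with $\Lambda\Theta$ under $\mathrm{tr}$ annihilates the commutator term by cyclicity, so $\mathrm{Re}\langle (\partial_t+\Delta_A)\Lambda\Theta,\Lambda\Theta\rangle = 0$. The Bochner formula $\Delta_A|u|^2 = 2\mathrm{Re}\langle\Delta_A u,u\rangle - 2|d_A u|^2$ applied to $u=\Lambda\Theta$ then gives
\begin{equation*}
(\partial_t + \Delta_A)|\Lambda\Theta|^2 \;=\; 2\mathrm{Re}\langle (\partial_t+\Delta_A)\Lambda\Theta,\Lambda\Theta\rangle - 2|d_A\Lambda\Theta|^2 \;=\; -2|d_A\Lambda\Theta|^2 \;\leq\; 0.
\end{equation*}
Integration, using $\int_X \Delta_A|\Lambda\Theta|^2\,dvol = 0$ and $|d_A\Lambda\Theta|=|d_A^*\Theta|$ on a Riemann surface, recovers the integrated identity as a consistency check.

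The main technical obstacle is the commutator bookkeeping in the derivation of the Weitzenb\"ock-type identity above: one must verify that the K\"ahler identities and the Jacobi identity conspire exactly so that the right-hand side is the single commutator $-i[[\phi,\phi^{*H}],\Lambda\Theta]$, which then vanishes when paired against $\Lambda\Theta$. Once this cancellation is established the pointwise inequality is actually an equality with $-2|d_A\Lambda\Theta|^2$, which strengthens the stated conclusion and simultaneously explains why only $-2\|d_A^*\Theta\|^2$ (and not an additional Higgs dissipation term) appears in the integrated form.
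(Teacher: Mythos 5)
The paper offers no proof of this proposition at all --- it is introduced with ``recall some basic facts \dots without proof,'' the untwisted case being attributed to Wilkin and the twisted case to Wang--Zhang --- so your attempt can only be judged on its own merits. Your skeleton (differentiate $\Theta$ along the flow, integrate by parts on the $d_A\dot A$ term, run a Bochner argument on $\Lambda\Theta$ for the pointwise inequality) is the standard one, but the crucial step, which you yourself flag as ``the main technical obstacle \dots one must verify,'' fails when the bookkeeping is actually done. Writing $\theta=*\Theta$ and $\phi=\Phi\,dz$, the flow gives $\dot\Phi=\mp i[\Phi,\theta]$, and taking adjoints conjugates the factor of $i$, so $\dot\Phi^{*}=\pm i[\Phi^{*},\theta]$. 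Substituting into $[\dot\phi,\phi^{*H}]+[\phi,\dot\phi^{*H}]$, the two factors of $\mp i$ and $\pm i$ combine with the sign flip in $[[\Phi,\theta],\Phi^{*}]=-[\Phi^{*},[\Phi,\theta]]$ to produce the \emph{symmetric} combination $\mathrm{ad}_{\Phi}\mathrm{ad}_{\Phi^{*}}(\theta)+\mathrm{ad}_{\Phi^{*}}\mathrm{ad}_{\Phi}(\theta)$, with the same sign on both summands. The Jacobi identity gives $[[\Phi,\Phi^{*}],\theta]=[\Phi,[\Phi^{*},\theta]]-[\Phi^{*},[\Phi,\theta]]$, i.e.\ the \emph{antisymmetric} combination, so your collapse to the single commutator $-i[[\phi,\phi^{*H}],\Lambda\Theta]$ is not valid. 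Pairing the symmetric combination with $\theta$ yields $\langle[\Phi,[\Phi^{*},\theta]],\theta\rangle=\langle[\Phi^{*},[\Phi,\theta]],\theta\rangle=|[\Phi,\theta]|^{2}$, not zero.

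The consequence is that the correct Weitzenb\"ock identity reads $(\partial_t+\Delta_A)|\Lambda\Theta|^{2}=-2|d_A\Lambda\Theta|^{2}-c\,|[\phi,\Lambda\Theta]|^{2}$ with $c>0$, and correspondingly the integrated formula carries an extra nonpositive term $-c\,\|[\phi,*\Theta]\|^{2}$. So the inequalities you are asked to prove do hold, but not for the reason you give: the Higgs contribution is a genuine dissipation term, not a cancellation, and your ``explanation'' in the integrated computation --- that the Higgs commutators ``fold back into'' $d_A^{*}\Theta$ and that ``no extra term survives'' --- is not an argument. A sanity check should have caught this: the flow is the downward gradient flow of $\text{YMH}=\|\Lambda\Theta\|^{2}$, whose critical points require \emph{both} $d_A^{*}\Theta=0$ and $[\phi,*\Theta]=0$ (as the paper states in its description of critical points), so $\tfrac{d}{dt}\text{YMH}=-|\mathrm{grad}\,\text{YMH}|^{2}$ must contain a $\|[\phi,*\Theta]\|^{2}$ term, otherwise YMH could be stationary at a non-critical pair. (The equality as printed in the proposition appears to suppress this term as well; only the inequality is used in the sequel.) Two smaller points: the invoked ``Chern--Weil relation $\Lambda[\phi,\phi^{*H}]=-i[\phi,\phi^{*H}]$'' is not meaningful since $\Lambda$ lowers form degree by two, and the remark ``$\Lambda d_A^{*}\Theta=0$'' is beside the point --- the term to control is $\Lambda d_Ad_A^{*}\Theta$, which on a Riemann surface equals $\Delta_A(\Lambda\Theta)$ directly from $d_A^{*}=-*d_A*$, with no K\"ahler-identity corrections needed.
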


The proof of the untwist case follows form \cite{Wil06morse}, the twist case is a special case of \cite{WZ11twisted}. By the convergence of the YMH flow, $\Lambda R_t \stackrel{L^p}\longrightarrow \Lambda R_\infty$, there is

\begin{lemma}
\begin{equation*}
\lim_{t\rightarrow\infty}\text{YMH}(A_t,\phi_t)=\text{YMH}(A_\infty,
\phi_\infty).
\end{equation*}
\end{lemma}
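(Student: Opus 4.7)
The plan is to extract the lemma directly from the convergence results for the YMH flow that were listed as bullets at the end of Section~2.3, together with the monotonicity statement in the preceding proposition. The existence of a limit of $\text{YMH}(A_t,\phi_t)$ as $t\to\infty$ is immediate: by that proposition $t\mapsto\text{YMH}(A_t,\phi_t)$ is non-increasing and obviously non-negative, so the limit exists. The only real content is therefore to identify this limit with the value at $(A_\infty,\phi_\infty)$.

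First I would recall the Bianchi/Weitzenb\"ock argument behind the proposition, which gives not just $\tfrac{d}{dt}\|\Theta\|^2=-2\|d_A^{*}\Theta\|^2$ but also the differential inequality $\partial_t|\Lambda\Theta|^2+\triangle_A|\Lambda\Theta|^2\le 0$; combined with a standard parabolic maximum-principle argument on the Riemann surface $X$ this yields a uniform $L^\infty$ bound on $\Lambda\Theta_t$ for all $t\ge 0$. This is the bound that will let me upgrade convergence to the desired $L^p$ convergence of $\Lambda\Theta$.

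Next I would invoke the convergence results listed at the end of Section~2.3 (proved in the twist case via the equivalence with Simpson's heat flow on $H$ and Wang--Zhang's twisted extension): the flow $(A_t,\phi_t)$ converges to $(A_\infty,\phi_\infty)$ without gauge transformation, and in fact in $H^k$ for every $k$ on finite time intervals, with a uniform smooth bound at infinity in the sense that follows from the Uhlenbeck-type compactness available in complex dimension~$1$. In particular $A_t\to A_\infty$ and $\phi_t\to\phi_\infty$ in, say, $C^1$; then $F_{A_t}=dA_t+A_t\wedge A_t$ converges to $F_{A_\infty}$ in $C^0$, and $[\phi_t,\phi_t^{*H}]$ converges to $[\phi_\infty,\phi_\infty^{*H}]$ in $C^0$ as well. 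Consequently $\Lambda\Theta_t\to\Lambda\Theta_\infty$ pointwise on $X$, and by the uniform $L^\infty$ bound from the previous step the dominated convergence theorem gives $\Lambda\Theta_t\to\Lambda\Theta_\infty$ in $L^p(X)$ for every $p\in[1,\infty)$. Taking $p=2$ and squaring yields
\begin{equation*}
\text{YMH}(A_t,\phi_t)=\|\Lambda\Theta_t\|_{L^2}^2\;\longrightarrow\;\|\Lambda\Theta_\infty\|_{L^2}^2=\text{YMH}(A_\infty,\phi_\infty),
\end{equation*}
which is the lemma.

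The only subtle step is the justification that the smooth/$C^1$-type convergence without gauge holds on all of $[0,\infty)$, rather than merely up to a gauge transformation and along subsequences; this is exactly what the ``Convergence without gauge transformation'' bullet asserts, and its proof in the twisted setting follows the argument of Wilkin~\cite{Wil06morse} verbatim because the extra factor of $L$ does not affect the Uhlenbeck/elliptic-regularity machinery once the Chern connection on $L$ is fixed. With that convergence granted, the argument above is essentially the continuity of the $L^2$ norm under $L^p$ convergence of $\Lambda\Theta$, and no further ingredients are needed.
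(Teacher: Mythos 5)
Your proposal follows essentially the same route as the paper: the paper's proof is the one-line observation that the convergence of the YMH flow gives $\Lambda\Theta_t\to\Lambda\Theta_\infty$ in $L^p$, from which the lemma follows by continuity of the $L^2$ norm, and you reach the same conclusion by the same mechanism, merely supplying the details (uniform $L^\infty$ bound from the maximum principle, pointwise convergence from the gauge-free convergence of the flow, and dominated convergence) that the paper leaves to the cited convergence results. No substantive difference.
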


In order to compare different HN type, define a partial order of the
$n$-tuple $\overrightarrow{\mu}=(\mu_1,\cdots,\mu_n)$,
$\mu_1\geq\cdots\geq\mu_n$. For the Chern class is fixed, we only
need to take care the case
$\sum^n_{i=1}\mu_i=\sum^n_{i=1}\lambda_i$. We call
$\overrightarrow{\mu} \leq \overrightarrow{\lambda}$ if $\sum_{j
\leq k}\mu_j=\sum_{j \leq k}\lambda_j$ for all $k=1,\dots,n$. As we
know, if $E$ admit a critical twist Higgs pair, then it splits.
Abuse the notation, let $\overrightarrow{\mu}$ denote the split
bundle, then $\text{YMH}(\overrightarrow{\mu})=2\pi \sum^n_{i=1}
\mu_i^2$. It is easy to verify $\overrightarrow{\mu} \leq
\overrightarrow{\lambda}$ implying $\text{YMH}(\overrightarrow{\mu})
\leq \text{YMH}(\overrightarrow{\lambda})$. In the next, we study
how the HN type changes along the gradient flow. We need an
algebraic lemma.

\begin{lemma}
Let $(E, \bar{\partial},\phi)$ be a twist Higgs bundle and $S$ be a
$\phi$ invariant subbundle. Endow a Hermitian metric on $E$, let
$\pi = \pi^* =\pi^2$ denote the orthogonal projection onto the
subbundle $S$. Then
\begin{equation*}
Tr([\Phi,\Phi^*] \pi) =|[\phi, \pi]|^2.
\end{equation*}
where the inner product $|[\phi,\pi]|^2$ is defined to be
$Tr([\phi,\pi][\phi,\pi]^{*H})$ after contraction the section of $L$
and $L^*$ by the fix Hermitian metric $h$.
\end{lemma}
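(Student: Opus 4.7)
The statement is a pointwise algebraic identity, and the proof is a short trace manipulation.

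First, I would work in a local trivialization where $\phi = \Phi\,dz \otimes s$ with $\Phi\in\Gamma(End(E))$. The $\phi$-invariance of $S$, i.e.\ $\phi(S)\subset S\otimes L$, translates into the pointwise matrix identity $(I-\pi)\Phi\pi = 0$, or equivalently $\Phi\pi = \pi\Phi\pi$; taking the $H$-adjoint of this identity gives the companion relation $\pi\Phi^{*H} = \pi\Phi^{*H}\pi$. From these one computes directly
\[
[\Phi,\pi] = \Phi\pi - \pi\Phi = \pi\Phi\pi - \pi\Phi = -\pi\Phi(I-\pi), \qquad [\Phi,\pi]^{*H} = -(I-\pi)\Phi^{*H}\pi.
\]

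Second, after the $L\otimes L^{*}\to\mathbb{C}$ contraction via $h$ and pairing the $(1,0)$ and $(0,1)$ form factors, the defining expression $Tr([\phi,\pi][\phi,\pi]^{*H})$ collapses to the endomorphism trace
\[
Tr\bigl(\pi\Phi(I-\pi)\Phi^{*H}\pi\bigr) = Tr(\Phi\Phi^{*H}\pi) \,-\, Tr(\Phi\pi\Phi^{*H}\pi),
\]
where I have used $\pi^{2}=\pi$ and cyclic invariance of $Tr$.

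Third, the key observation is that the identity $\Phi\pi = \pi\Phi\pi$ lets me rewrite the second trace as
\[
Tr(\Phi\pi\Phi^{*H}\pi) = Tr(\Phi^{*H}\pi\Phi\pi) = Tr(\Phi^{*H}\Phi\pi),
\]
again by cyclicity. Substituting back gives $Tr\bigl((\Phi\Phi^{*H}-\Phi^{*H}\Phi)\pi\bigr) = Tr\bigl([\Phi,\Phi^{*H}]\pi\bigr)$, which is the left-hand side. The only point requiring care is the sign convention that turns the form-level ``bracket'' $[\phi,\phi^{*H}] = \phi\phi^{*H}+\phi^{*H}\phi$ (an anticommutator in the matrix slot, because $dz\wedge d\bar z = -d\bar z\wedge dz$) into the ordinary matrix commutator $[\Phi,\Phi^{*H}]$ after extracting the form factor, together with consistent use of $h$ to contract the $L\otimes L^{*}$ twist. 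Once those conventions are pinned down, there is no real obstacle and the identity is essentially tautological.
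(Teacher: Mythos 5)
Your proof is correct, but it is genuinely different from the one in the paper, and the difference matters. The paper's proof is a bare trace manipulation: it expands $Tr\bigl((\phi\pi-\pi\phi)(\pi\phi^{*H}-\phi^{*H}\pi)\bigr)$, claims the two cross terms assemble into a bracket $[\pi\phi^{*H},\pi\phi]$ that is ``always trace free,'' and never visibly invokes the hypothesis that $S$ is $\phi$-invariant. You instead encode the invariance as $(I-\pi)\Phi\pi=0$, deduce $[\Phi,\pi]=-\pi\Phi(I-\pi)$, and push the identity through with one application of cyclicity plus one application of $\pi\Phi\pi=\Phi\pi$. Your route is the right one: a direct check shows
\begin{equation*}
Tr\bigl([\Phi,\pi][\Phi,\pi]^{*H}\bigr)-Tr\bigl([\Phi,\Phi^{*H}]\pi\bigr)=2\,Tr\bigl(\pi\Phi^{*H}(I-\pi)\Phi\pi\bigr)=2\,\bigl|(I-\pi)\Phi\pi\bigr|^{2},
\end{equation*}
so the identity is \emph{false} without $\phi$-invariance, and the two cross terms in the paper's expansion contribute $-2|\pi\Phi\pi|^{2}$, which is not zero in general; the invariance is needed precisely to convert $Tr(\pi\Phi\pi\Phi^{*H})$ into $Tr(\Phi^{*H}\Phi\pi)$. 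In short, what your approach buys is that the indispensable use of the hypothesis is made explicit and the signs are controlled at the matrix level, whereas the paper's purely formal computation obscures (and, as written, skips over) exactly this point. Your closing remark about the form-level anticommutator $\phi\phi^{*H}+\phi^{*H}\phi$ contracting to the matrix commutator $[\Phi,\Phi^{*H}]$ is also the correct way to reconcile the conventions.
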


\begin{proof}
Compute it straight forward.
\begin{equation*}
\begin{split}
|[\phi,\pi]|^2=&Tr([\phi,\pi][\phi,\pi]^{*H})=Tr((\phi\pi-\pi\phi)(\pi\phi^{*H}-\phi^{*H}\pi))\\
=&Tr(\phi\pi\pi\phi^{*H}-\phi\pi\phi^{*H}\pi-\pi\phi\pi\phi^{*H}+\pi\phi\phi^{*H}\pi)
\end{split}
\end{equation*}
by the acyclicity of the trace, there is
$-\phi\pi\phi^{*H}\pi=\pi\phi^{*H}\pi\phi$ thus
\begin{equation*}
-\phi\pi\phi^{*H}\pi-\pi\phi\pi\phi^{*H}=[\pi\phi^{*H},\pi\phi]
\end{equation*}
which is always trace free. Still by the acyclicity,
\begin{equation*}
|[\phi,\pi]|^2=Tr(-\phi^{*H}\phi\pi\pi+\phi\phi^{*H}\pi\pi)=Tr([\phi,\phi^{*H}]\pi)
\end{equation*}\end{proof}

By Simpson(\cite{Sim88constructing}), the Chern-Weil formula reads
\begin{equation}\label{eq:chernweil}
\begin{split}
&deg(S)=\frac{1}{2\pi}\int_X Tr(\sqrt{-1}\Lambda F_A \pi)-|d_A'' \pi|^2 dvol\\
=&\frac{1}{2\pi}\int_X Tr(\sqrt{-1}\Lambda \Theta \pi)-Tr(\sqrt{-1}\Lambda [\phi,\phi^{*H}] \pi)-|d_A'' \pi|^2 dvol\\
=&\frac{1}{2\pi}\int_X Tr(\sqrt{-1}\Lambda \Theta \pi)dvol-\frac{1}{2\pi}||d_A'' \pi||^2-\frac{1}{2\pi}||[\phi,\pi]||^2\\
\end{split}
\end{equation}

\begin{proposition}
Denote the unitary gauge group of $E$ with fixed Hermitian metric
$H$ by $\mathfrak{u}(E)$. Let $(A_j, \phi_j)=g_j \cdot (A_0,
\phi_0)$ be a sequence of complex gauge equivalent Higgs structure
and $S$ be a $\phi_0$-invariant holomorphic subbundle of $(E,
d_{A_0}'', \phi_0)$ with rank $r$. Suppose $\sqrt{-1}\Lambda R_j
\stackrel{L^1}{\longrightarrow} \mathfrak{a}$, where $\mathfrak{a}
\in L^1(\sqrt{-1}\mathfrak{u}(E))$, and that the eigenvalues
$\lambda_1 \geq \cdots, \geq \lambda_n$ of $\mathfrak{a}$(counted
with multiplicities) are constant. Then $deg(S) \leq \sum_{i\leq r}
\lambda_i$.
\end{proposition}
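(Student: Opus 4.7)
The plan is to adapt the argument of Daskalopoulos--Wentworth \cite{DW2004} to the twist setting, replacing the standard Chern-Weil identity by the modified version \eqref{eq:chernweil}. The key ingredients will be: transporting $S$ along the complex gauge transformations $g_j$ to obtain a sequence of subbundles of $(E, d_{A_j}'', \phi_j)$; discarding the two non-positive terms in \eqref{eq:chernweil} to produce an inequality for $deg(S)$; applying a pointwise Ky Fan eigenvalue estimate; and passing to the $L^1$-limit.

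First, because each $g_j$ is a complex gauge transformation, the image $S_j = g_j(S)$ is a holomorphic, $\phi_j$-invariant subbundle of $(E, d_{A_j}'', \phi_j)$ with the same rank $r$ and the same degree as $S$. Let $\pi_j = \pi_j^{*H} = \pi_j^2$ denote the $H$-orthogonal projection onto $S_j$. Applying \eqref{eq:chernweil} to the pair $(A_j, \phi_j)$ and discarding the non-positive contributions $-\frac{1}{2\pi}\|d_{A_j}'' \pi_j\|^2$ and $-\frac{1}{2\pi}\|[\phi_j, \pi_j]\|^2$ yields
\begin{equation*}
deg(S) \;\leq\; \frac{1}{2\pi} \int_X Tr\bigl(\sqrt{-1}\Lambda \Theta_j \, \pi_j\bigr)\, dvol.
\end{equation*}

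Next I would invoke the standard linear-algebra inequality (Ky Fan's maximum principle): for a Hermitian operator $\mathfrak{a}$ with eigenvalues $\lambda_1 \geq \cdots \geq \lambda_n$ and any $H$-orthogonal rank-$r$ projection $\pi$, one has $Tr(\mathfrak{a}\pi) \leq \sum_{i=1}^r \lambda_i$ pointwise. Splitting
\begin{equation*}
Tr\bigl(\sqrt{-1}\Lambda \Theta_j\,\pi_j\bigr) = Tr(\mathfrak{a}\,\pi_j) + Tr\bigl((\sqrt{-1}\Lambda \Theta_j - \mathfrak{a})\,\pi_j\bigr),
\end{equation*}
the first summand is dominated pointwise by $\sum_{i=1}^r \lambda_i$, while the uniform bound $|\pi_j|^2 = Tr(\pi_j) = r$ combined with the $L^1$-convergence hypothesis controls the second by $\sqrt{r}\,\|\sqrt{-1}\Lambda \Theta_j - \mathfrak{a}\|_{L^1} \to 0$. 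Integrating and letting $j \to \infty$ gives $deg(S) \leq \sum_{i \leq r}\lambda_i$, up to the volume/$2\pi$ normalization of the K{\"a}hler form absorbed into the Chern-Weil convention.

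The main obstacle I expect is that the projections $\pi_j$ need not converge in any reasonable topology --- this non-convergence is precisely what permits the Harder-Narasimhan subbundle of the initial pair to ``jump'' at the analytic limit, and is the whole reason the proposition is interesting rather than trivial. The Ky Fan estimate is the device that sidesteps the need for $\pi_j$-convergence: one only uses the uniform $L^\infty$ bound and the pointwise eigenvalue inequality, neither of which requires extracting a limiting projection. A minor technical point worth verifying is that the subbundle form of \eqref{eq:chernweil}, derived in the preceding algebraic lemma for $(E, d_{A_0}'', \phi_0)$, applies verbatim to $S_j \subset (E, d_{A_j}'', \phi_j)$; this follows from gauge invariance of both sides of the identity.
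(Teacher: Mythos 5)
Your proposal is correct and follows essentially the same route as the paper: apply the modified Chern-Weil formula \eqref{eq:chernweil} to the transported subbundle $g_j(S)$, drop the two non-positive terms $-\frac{1}{2\pi}\|d_{A_j}''\pi_j\|^2$ and $-\frac{1}{2\pi}\|[\phi_j,\pi_j]\|^2$, split off $Tr(\mathfrak{a}\pi_j)$, bound it pointwise by $\sum_{i\le r}\lambda_i$ via the Ky Fan eigenvalue inequality (the ``linear algebra'' fact the paper cites from Lemma 2.20 of \cite{DW2004}), and kill the remainder using the $L^1$-convergence together with the uniform bound on the projections. Your added remarks on why the $\pi_j$ need not converge and on gauge invariance of the Chern-Weil identity are sound but not needed beyond what the paper does.
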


\begin{proof}
Let $\pi_j: E\rightarrow g_j(s)$ denote the orthogonal projection.
By above Chern-Weil formula
\begin{equation*}
\begin{split}
deg(S)=&\frac{1}{2\pi}\int_X Tr(\sqrt{-1}\Lambda \Theta_j \pi_j)dvol-\frac{1}{2\pi}||d_{A_j}'' \pi_j||^2-\frac{1}{2\pi}||[\phi_j,\pi_j]||^2\\
\leq& \frac{1}{2\pi}\int_X Tr(\sqrt{-1}\Lambda \Theta_j \pi_j)dvol\\
=&\frac{1}{2\pi}\int_X Tr(\mathfrak{a}\pi_j)dvol+\frac{1}{2\pi}\int_X(Tr(\sqrt{-1}\Lambda \Theta_j-\mathfrak{a}) \pi_j)dvol\\
\end{split}
\end{equation*}
Still by linear algebra(cf. the material under the proof of Lemma
2.20 in \cite{DW2004}), $Tr(\mathfrak{a}\pi_j) \leq \sum_{i \leq r}
\lambda_i$. Let $j \rightarrow \infty$, the last term tends to zero
,finishing the proof.
\end{proof}


\noindent \textbf{Remark:} Recall that in the untwist case, Simpson
use the connection $D''=d_A''+\phi$(this is not the (0,1) component
$\nabla''$ of the non-unitary connection $\nabla$). The Chern-Weil
formula reads
\begin{equation*}
deg(S)=\frac{1}{2\pi}\int_X Tr(\sqrt{-1}\Lambda R-|D''\pi|^2)dvol.
\end{equation*}
Notice that $|D''\pi|^2=|d_A''\pi|^2+|\phi \pi|^2$ for $d_A''$ is a
(0,1)-form and $\phi$ is a (1,0)-form. So the operator $D''$ is in
effect split and we can threat them independently, this is why the
results in untwist case can be transported to the twist case(Reader
could also refer to \cite{Wil06morse} for the symplectic geometry
interpretation).

Recall the partial ordering of HN types of Higgs bundle
$(E,\bar{\partial}, \phi)$, by the induction on the length of the HN
filtration(cf. \cite{DW2004}), we have:

\begin{proposition}\label{pr:nondecreasingHN}
Let $(A_t, \phi_t)$ be the solution along the YMH flow on a bundle
$(E,H)$ of rank $n$ with limit $(A_\infty, \phi_\infty)$. Let
$\overrightarrow{\mu}_0 = (\mu_1,\dots, \mu_n)$ be the HN type of
$(E, d_{A_0}'', \phi_0)$, and let $\overrightarrow{\lambda}_\infty =
(\lambda_1,\dots, \lambda_n)$ be the type of $(E, d_{A_\infty}'',
\phi_\infty)$. Then $\overrightarrow{\mu}_0 \leq
\overrightarrow{\lambda}_\infty$.
\end{proposition}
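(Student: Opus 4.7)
The plan is to apply the preceding proposition to each $\phi_0$-invariant subbundle of the Harder-Narasimhan filtration of $(E,d_{A_0}'',\phi_0)$, and then to pass from the breakpoints of the filtration to every index $k\in\{1,\dots,n\}$ by a straightforward interpolation.

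The preparatory step is to verify the hypotheses of the preceding proposition in this setting. Along the YMH flow the pairs $(A_t,\phi_t)$ arise from $(A_0,\phi_0)$ by the family of complex gauge transformations $g(t)$ built from the heat flow \eqref{eq:heatflow}, so they are complex gauge equivalent to the initial data for every finite $t$. The convergence statements recalled in Section~2, combined with the critical-point splitting proposition, show that $\sqrt{-1}\Lambda\Theta_t \to \sqrt{-1}\Lambda\Theta_\infty$ in $L^1$ and that the eigenvalues $\lambda_1 \geq \cdots \geq \lambda_n$ of $\sqrt{-1}\Lambda\Theta_\infty$ are constant functions on $X$. This is exactly the setup of the preceding proposition, with $\mathfrak{a}=\sqrt{-1}\Lambda\Theta_\infty$.

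Next, I would let $0=E_0\subset E_1\subset\cdots\subset E_r=E$ be the HN filtration of $(E,d_{A_0}'',\phi_0)$ and write $r_\ell = \mathrm{rank}(E_\ell)$. Each $E_\ell$ is $\phi_0$-invariant, so taking $S=E_\ell$ in the preceding proposition gives
\begin{equation*}
\deg(E_\ell) \;\leq\; \sum_{i \leq r_\ell}\lambda_i.
\end{equation*}
By the construction of the HN type, $\deg(E_\ell)=\sum_{i\leq r_\ell}\mu_i$, so the desired partial-sum inequality holds at every HN breakpoint $k=r_\ell$; equality at $k=n$ is automatic, because both sides then equal $\deg(E)$.

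It remains to pass from the breakpoints to every $k\in\{1,\dots,n\}$. On each interval $r_{\ell-1}<k<r_\ell$ the HN type is constant equal to $\mu_\ell$, so $k\mapsto\sum_{i\leq k}\mu_i$ is affine; on the other hand, $k\mapsto\sum_{i\leq k}\lambda_i$ is concave, since $\lambda_i$ is non-increasing. Linearly interpolating the breakpoint inequalities at $r_{\ell-1}$ and $r_\ell$ and invoking concavity then yields
\begin{equation*}
\sum_{i\leq k}\mu_i \;\leq\; \frac{r_\ell-k}{r_\ell-r_{\ell-1}}\sum_{i\leq r_{\ell-1}}\lambda_i + \frac{k-r_{\ell-1}}{r_\ell-r_{\ell-1}}\sum_{i\leq r_\ell}\lambda_i \;\leq\; \sum_{i\leq k}\lambda_i,
\end{equation*}
which is the desired majorization $\overrightarrow{\mu}_0 \leq \overrightarrow{\lambda}_\infty$.

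The bulk of the analytic content is already discharged by the twist Chern--Weil identity \eqref{eq:chernweil} and the preceding proposition. The step I expect to require the most care is justifying the breakpoint inequality uniformly in $t$: one must check that the two non-positive terms $-\|d_{A_t}''\pi_t\|^2$ and $-\|[\phi_t,\pi_t]\|^2$ in the twist Chern--Weil identity may be dropped with no lost information, and that the $L^1$ convergence of $\sqrt{-1}\Lambda\Theta_t$ suffices to make $\int\mathrm{Tr}((\sqrt{-1}\Lambda\Theta_t - \mathfrak{a})\pi_t)\,dvol$ small without any a priori bound on the projections $\pi_t$ onto the transported subbundles $g(t)(E_\ell)$. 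Once this is granted, the interpolation step above is essentially a one-line combinatorial observation.
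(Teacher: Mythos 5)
Your argument is correct and is essentially the one the paper intends: the entire analytic content is carried by the preceding proposition ($\deg(S)\leq\sum_{i\leq r}\lambda_i$ via the twisted Chern--Weil identity), applied to the $\phi_0$-invariant subbundles of the Harder--Narasimhan filtration of the initial data, with the identification of $\overrightarrow{\lambda}_\infty$ with the constant eigenvalues of $\sqrt{-1}\Lambda\Theta_\infty$ coming from the critical-point splitting. The paper simply defers the remaining combinatorics to the induction on the length of the HN filtration in \cite{DW2004}, whereas you make that step explicit via the affine-versus-concave interpolation between breakpoints; this is a correct and clean substitute, and the worry you flag about the error term $\int \mathrm{Tr}((\sqrt{-1}\Lambda\Theta_t-\mathfrak{a})\pi_t)\,dvol$ is already disposed of inside the preceding proposition, since $\pi_t$ is an orthogonal projection and hence pointwise bounded.
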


This is equivalent to say that the HN type is non-decreasing. Recall
that YMH functional is non-increasing along the gradient flow, we
get following easy corollary generalizes a result in \cite{AB83} to
Higgs bundle:

\begin{corollary}
Let $\overrightarrow{\mu}$ be the HN type of $(E, \bar{\partial},
\phi)$. For any Hermitian metric $H$, denote $A$ the unitary
connection, then $\text{YMH}(A,\phi) \geq 2\pi \sum_{i=1}^n
\mu_i^2$, and the equality holds iff $H$ is the split Hermitian
Yang-Mills metric.
\end{corollary}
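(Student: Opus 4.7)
The strategy is to run the YMH gradient flow starting from the Chern connection $A$ determined by $H$ (together with the given $\phi$) and chain the two monotonicity statements just established: $\mathrm{YMH}$ is non-increasing, while the HN type is non-decreasing in the partial order.

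Concretely, let $(A_t,\phi_t)$ be the gradient flow with initial data $(A_0,\phi_0)=(A,\phi)$, and let $(A_\infty,\phi_\infty)$ be its limit. By the monotonicity proposition above, $\mathrm{YMH}(A,\phi)\ge \mathrm{YMH}(A_\infty,\phi_\infty)$. Since $(A_\infty,\phi_\infty)$ is a critical point, the proposition on the structure of critical points produces a $\phi_\infty$-invariant orthogonal splitting
\begin{equation*}
(E,d_{A_\infty}'',\phi_\infty)=\bigoplus_{i=1}^l (E_i,d_{A_{\infty,i}}'',\phi_{\infty,i}),\qquad \sqrt{-1}\Lambda\Theta_i=\mu(E_i)\mathrm{Id}_{E_i},
\end{equation*}
so writing $\overrightarrow{\lambda}_\infty$ for the $n$-tuple of eigenvalues of $\sqrt{-1}\Lambda\Theta_\infty$ counted with multiplicity we obtain $\mathrm{YMH}(A_\infty,\phi_\infty)=2\pi\sum_i\lambda_i^2$. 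By Proposition \ref{pr:nondecreasingHN}, $\overrightarrow{\mu}\le\overrightarrow{\lambda}_\infty$, and the elementary observation already noted in the text then yields $2\pi\sum_i\mu_i^2\le 2\pi\sum_i\lambda_i^2$. Chaining produces $\mathrm{YMH}(A,\phi)\ge 2\pi\sum_i\mu_i^2$.

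For the equality case, assume $\mathrm{YMH}(A,\phi)=2\pi\sum_i\mu_i^2$; then both inequalities above are equalities. The first, combined with $\frac{d}{dt}\mathrm{YMH}(A_t,\phi_t)=-2\|d_{A_t}^*\Theta_t\|^2\le 0$, forces the flow to be stationary, so $(A,\phi)$ itself satisfies the Euler-Lagrange equations $d_A^*\Theta=0$ and $[\phi,*\Theta]=0$, i.e.\ $(A,\phi)$ is already a critical point and thus carries an $H$-orthogonal $\phi$-invariant splitting into pieces each satisfying the Hitchin equation with slope the common eigenvalue. The second equality, together with $\overrightarrow{\mu}\le\overrightarrow{\lambda}_\infty$ and the strict convexity of $x\mapsto x^2$ under the partial order (a standard majorization argument using the fixed total degree), forces $\overrightarrow{\mu}=\overrightarrow{\lambda}_\infty$; hence the slopes of the split summands of $(E,\bar{\partial},\phi)$ are exactly the HN slopes, so $H$ is the split Hermitian Yang-Mills metric. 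The converse is immediate: on a split HYM metric $\sqrt{-1}\Lambda\Theta$ equals $\mu_i\mathrm{Id}_{E_i}$ on each factor, giving $\mathrm{YMH}(A,\phi)=2\pi\sum_i\mu_i^2$ directly.

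The only non-routine step is the strict equality clause for the partial order, i.e.\ showing $\overrightarrow{\mu}\le\overrightarrow{\lambda}$ together with $\sum\mu_i^2=\sum\lambda_i^2$ forces $\overrightarrow{\mu}=\overrightarrow{\lambda}$; this is a short Hardy-Littlewood-P\'olya style computation exploiting that both sequences are non-increasing and share the same total sum, and it is the main place where the algebraic structure (as opposed to the analytic flow) enters the argument.
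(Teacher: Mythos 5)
Your argument is correct and is exactly the route the paper intends: run the flow from $(A,\phi)$, use that $\mathrm{YMH}$ is non-increasing (Proposition on monotonicity) while the HN type is non-decreasing (Proposition \ref{pr:nondecreasingHN}), evaluate $\mathrm{YMH}$ at the split critical limit, and handle equality via stationarity of the flow plus strict Schur-convexity of $\sum_i x_i^2$ under the dominance order with fixed total degree. The paper states this only as an ``easy corollary'' without writing out the details, and your proposal supplies precisely those details, including the equality case.
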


This corollary asserts that the HN type can be viewed as a lower
bound of the YMH functional.

\subsection{Weighted YMH functionals}

Notice that
$\text{YMH}(\overrightarrow{\mu})=\text{YMH}(\overrightarrow{\lambda})$
is only a necessary condition for
$\overrightarrow{\mu}=\overrightarrow{\lambda}$. To distinguish
different HN types, we need more functionals. On Riemann surface,
people often use convex functionals to detect the HN type, for
instance \cite{AB83} the vector bundle case and \cite{Wil06morse}
the Higgs bundle case. On K{\"a}hler surface, Daskalopoulos and
Wentworth(\cite{DW2004}) restrict themselves to a subclass of convex
functionals, namely weighted Yang-Mills functionals. Here we follow
their idea, apply this method to the Higgs case(see also
\cite{LZ2011}). Let $\mathfrak{u}(n)$ denote the Lie algebra of the
unitary group $U(n)$. Fix a real number $\alpha \geq 1$. Then for
$\mathfrak{a} \in \mathfrak{u}(n)$, a skew hermitian matrix with
eigenvalues $\sqrt{-1}\lambda_1, \dots, \sqrt{-1}\lambda_n$, let
$\psi_\alpha(\mathfrak{a})=\sum_{j=1}^n |\lambda_j|^\alpha$. By
Prop. 12.16 in \cite{AB83}, $\psi_\alpha$ is a convex function on
$\mathfrak{u}(n)$. Moreover, for a given number $N$, define:

\begin{equation*}
\text{YMH}_{\alpha,N}(A,\phi)=\int_X \psi_\alpha(\Lambda
\Theta+\sqrt{-1}N Id_E)dvol.
\end{equation*}
Take the convention
$\text{YMH}_\alpha(A,\phi)=\text{YMH}_{\alpha,0}(A,\phi)$, and
notice that $\text{YMH} = \text{YMH}_2$ is the ordinary YMH
functional. We make a slight abuse of notation, setting
\begin{equation*}
\text{YMH}_{\alpha,N} (\overrightarrow{\mu}) = \text{YMH}_\alpha
(\overrightarrow{\mu} + N) =
2\pi\psi_\alpha(\sqrt{-1}(\overrightarrow{\mu} + N))
\end{equation*}
where $\overrightarrow{\mu}+N = (\mu_1+N, \dots, \mu_n +N)$ is
identified with the diagonal matrix $diag((\mu_1+N, \dots, \mu_n
+N)$.

Following lemma reveal the connection between weighted YMH
functional and the approximate critical Hermitian structure will be
studied in next subsection.

\begin{lemma}
The functional $\mathfrak{a} \rightarrow (\int_X \psi_\alpha(\mathfrak{a})dvol)^\frac{1}{\alpha}$ defines a norm on $L^\alpha(\mathfrak{u}(E))$ which is
equivalent to the $L^\alpha$ norm $(\int_X (-Tr \mathfrak{a}\cdot\mathfrak{a}^*  )^\frac{\alpha}{2} dvol)^\frac{1}{\alpha}$.
\end{lemma}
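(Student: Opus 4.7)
The plan is to reduce the claim to a pointwise comparison between two norms on the finite-dimensional Lie algebra $\mathfrak{u}(n) \cong \mathfrak{u}(E_x)$ and then globalise by integration. For $\mathfrak{a}(x) \in \mathfrak{u}(E_x)$ with eigenvalues $\sqrt{-1}\lambda_1(x), \dots, \sqrt{-1}\lambda_n(x)$, the first integrand equals $\sum_j |\lambda_j(x)|^\alpha$, while the Hilbert--Schmidt integrand equals $\bigl(\sum_j \lambda_j(x)^2\bigr)^{\alpha/2}$. So fibrewise the question is the equivalence of the $\ell^\alpha$- and $\ell^2$-norms on the spectrum, together with the assertion that $|\mathfrak{a}|_\alpha := \psi_\alpha(\mathfrak{a})^{1/\alpha}$ is genuinely a norm on $\mathfrak{u}(n)$.

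First I would verify that $|\cdot|_\alpha$ satisfies the norm axioms on $\mathfrak{u}(n)$. Positive-definiteness and absolute homogeneity of degree one are immediate from the spectral expression, since $c\mathfrak{a}$ has eigenvalues $c\sqrt{-1}\lambda_j$ and $\psi_\alpha(\mathfrak{a})=0$ forces $\mathfrak{a}=0$. The nontrivial point is the triangle inequality, which is nothing but the statement that the Schatten $\alpha$-norm on anti-Hermitian operators on $\mathbb{C}^n$ is a norm; for $\alpha \geq 1$ this follows from Ky Fan's majorisation theorem (the singular values of $\mathfrak{a}+\mathfrak{b}$ are weakly majorised by the ordered sums of singular values of $\mathfrak{a}$ and $\mathfrak{b}$) combined with Minkowski's inequality on $\mathbb{R}^n$. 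This is classical matrix analysis and could simply be cited.

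Next, because $\mathfrak{u}(n)$ is finite-dimensional, any two norms on it are equivalent, so there exist constants $c_n, C_n > 0$ depending only on $n$ and $\alpha$ such that
\begin{equation*}
c_n\, |\mathfrak{a}|_{HS} \leq |\mathfrak{a}|_\alpha \leq C_n\, |\mathfrak{a}|_{HS} \qquad \text{for all } \mathfrak{a} \in \mathfrak{u}(n),
\end{equation*}
and explicitly this is the pair of inequalities $n^{-|1/\alpha - 1/2|} \|\lambda\|_{\ell^2} \leq \|\lambda\|_{\ell^\alpha} \leq n^{|1/\alpha - 1/2|}\|\lambda\|_{\ell^2}$ on $\mathbb{R}^n$.

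Finally, raising the pointwise equivalence to the $\alpha$-th power and integrating over $X$ yields the equivalence of the two global quantities. That $\mathfrak{a} \mapsto \bigl(\int_X \psi_\alpha(\mathfrak{a})\, dvol\bigr)^{1/\alpha}$ is itself a norm on $L^\alpha(\mathfrak{u}(E))$ then follows routinely: homogeneity and non-degeneracy descend from the pointwise norm, while the triangle inequality comes from combining the pointwise triangle inequality with Minkowski's inequality in $L^\alpha(X, dvol)$. The main obstacle is thus the pointwise Schatten triangle inequality; once that classical fact is invoked, the rest is finite-dimensional linear algebra and an application of standard $L^p$-theory.
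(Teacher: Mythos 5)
Your argument is correct and follows essentially the same route as the paper: the paper's entire proof is the pointwise chain of inequalities $\frac{1}{C}(\sum_i|\lambda_i|^2)^{\alpha/2} \leq \sum_i|\lambda_i|^\alpha \leq C'(\sum_i|\lambda_i|^2)^{\alpha/2}$ comparing the $\ell^\alpha$- and $\ell^2$-norms of the eigenvalue vector, which is exactly your fibrewise reduction followed by integration. You are somewhat more careful than the paper in that you also verify the norm axioms (the Schatten triangle inequality via Ky Fan majorisation), a point the paper leaves implicit, relying on its earlier citation of the convexity of $\psi_\alpha$ from Atiyah--Bott.
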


\begin{proof}
\begin{equation*}
\frac{1}{C}(\sum_{i=1}^n|\lambda_i|^2)^\frac{\alpha}{2} \leq \frac{1}{C}( \sum_{i=1}^n|\lambda_i|)^\alpha \leq \sum_{i=1}^n|\lambda_i|^\alpha \leq C( \sum_{i=1}^n|\lambda_i|)^\alpha \leq C'( \sum_{i=1}^n|\lambda_i|^2)^\frac{\alpha}{2}.
\end{equation*}
\end{proof}

Now we focus on the relation between the weighted YMH functional and the HN type. Similar with the usual YMH functional, we have,

\begin{proposition}\label{pr:nonincreasingWYMH}
Let $(A_t, \phi_t)$ be a solution of the gradient flow. Then for any
$\alpha \geq 1$ and any $N$, $t \rightarrow
\text{YMH}_{\alpha,N}(A_t,\phi_t)$ is nonincreasing.
\end{proposition}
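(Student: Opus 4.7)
The plan is to derive a pointwise Bochner-type inequality for $u_t=\sqrt{-1}\Lambda\Theta_t+N\,Id_E$ along the flow and then combine it with the convexity of $\psi_\alpha$, following the Daskalopoulos--Wentworth strategy for weighted Yang--Mills functionals adapted to the twist Higgs setting.

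First one would compute the evolution of $u_t$. From $\partial_t A=-d_A^*\Theta$ and $\partial_t\phi=*[\phi,*\Theta]$ together with the Bianchi identity, the same manipulations that underlie the inequality $(\partial_t+\Delta_A)|\Lambda\Theta|^2\leq 0$ in the earlier proposition produce a self-adjoint operator identity of the schematic form
\begin{equation*}
\partial_t u_t+\Delta_A u_t=\mathcal{N}(u_t),
\end{equation*}
where $\mathcal{N}(u_t)$ assembles the second-order contributions coming from $[\phi,\phi^{*H}]$ and $[\phi,[\phi^{*H},u_t]]$. The trace-algebraic identity $Tr([\phi,\phi^{*H}]\pi)=|[\phi,\pi]|^2$ proved in the preceding subsection, applied to the spectral projectors of $u_t$, is exactly the tool that forces the commutator contribution to carry the correct sign; for $\alpha=2$, where $\psi_\alpha'(u)$ is a multiple of $u$, this mechanism already delivers the monotonicity of the ordinary YMH functional.

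Next, because $\psi_\alpha$ is a convex $U(n)$-invariant function on hermitian matrices, its gradient $\psi_\alpha'(\cdot)$ is well-defined by spectral calculus and commutes pointwise with $u_t$. Differentiating $\text{YMH}_{\alpha,N}$ in $t$ and substituting the evolution equation yields
\begin{equation*}
\frac{d}{dt}\text{YMH}_{\alpha,N}(A_t,\phi_t)=-\int_X\langle\psi_\alpha'(u_t),\Delta_A u_t\rangle\,dvol+\int_X\langle\psi_\alpha'(u_t),\mathcal{N}(u_t)\rangle\,dvol.
\end{equation*}
Integration by parts converts the first integrand into $\langle\nabla\psi_\alpha'(u_t),\nabla u_t\rangle$; since the Hessian of $\psi_\alpha$ is positive semi-definite by convexity, this expression is $\geq 0$, so the first term contributes nonpositively. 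For the second term, the commutation of $\psi_\alpha'(u_t)$ with $u_t$ reduces it to an expression to which the same trace/commutator identity applies, again with the correct sign. Summing the two contributions gives $\frac{d}{dt}\text{YMH}_{\alpha,N}(A_t,\phi_t)\leq 0$.

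The main obstacle will be the lack of smoothness of $\psi_\alpha$ at points where the eigenvalues of $u_t$ collide, especially for $\alpha$ close to $1$, where the spectral calculus becomes singular. One handles this by regularizing with the smooth approximant $\psi_{\alpha,\epsilon}(\mathfrak{a})=\sum_j(\lambda_j^2+\epsilon)^{\alpha/2}$, establishing monotonicity of the corresponding $\text{YMH}_{\alpha,N}^\epsilon$ for every $\epsilon>0$ by the argument above, and then letting $\epsilon\to 0$ using the uniform $L^p$ bounds on $\Lambda\Theta$ furnished by the convergence of the flow. The twist by $L$ enters the whole argument only through the $h$-contraction implicit in $[\phi,\phi^{*H}]$, which is already absorbed in $\Theta$, so the untwist computation transports directly.
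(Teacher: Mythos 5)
Your argument is correct and follows exactly the Daskalopoulos--Wentworth strategy that the paper itself relies on: the paper gives no proof of this proposition, deferring to \cite{DW2004}, and your sketch (evolution equation for $u_t=\sqrt{-1}\Lambda\Theta_t+N\,Id_E$, convexity and monotonicity of $\psi_\alpha'$ to control both the Laplacian term after integration by parts and the $[\phi,[\phi^{*H},\cdot]]$ commutator term via the eigenbasis/divided-difference computation, then smoothing $\psi_\alpha$ and passing to the limit using the uniform bound on $\Lambda\Theta$) is precisely that argument adapted to the twist Higgs setting. There is no substantive difference from the proof the paper intends.
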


\begin{proposition}
Let $(A_\infty, \phi_\infty)$ be a limit of $(A_t, \phi_t)$, where
$(A_t, \phi_t)$ is a solution to Equation \eqref{eq:gradientflow}.
Then for any $\alpha \geq 1$ and any $N$, $\lim_{t \rightarrow
\infty} \text{YMH}_{\alpha,N}(A_t, \phi_t) =
\text{YMH}_{\alpha,N}(A_\infty, \phi_\infty)$.
\end{proposition}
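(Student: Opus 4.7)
The plan is to follow the same scheme as the preceding lemma, which established $\lim_{t\to\infty}\text{YMH}(A_t,\phi_t)=\text{YMH}(A_\infty,\phi_\infty)$, with the norm-equivalence lemma just proved in place of the trivial $L^2$-identity used implicitly there. The essential idea is to reduce convergence of $\text{YMH}_{\alpha,N}$ to an $L^\alpha$-convergence of $\Lambda\Theta_t$.

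First I would upgrade the convergence $\Lambda\Theta_t\to\Lambda\Theta_\infty$ from $L^p$ (as cited just before the preceding lemma, coming from the convergence of the YMH flow proved by Wilkin and its twisted analogue in \cite{WZ11twisted}) to $L^\alpha$ for every $\alpha\geq 1$. The first proposition of this section gives $\frac{\partial}{\partial t}|\Lambda\Theta|^2+\triangle_A|\Lambda\Theta|^2\leq 0$, so the maximum principle yields a uniform bound $\|\Lambda\Theta_t\|_{L^\infty}\leq\|\Lambda\Theta_0\|_{L^\infty}$ for all $t\in[0,\infty)$; the bound also passes to the limit. Combining the uniform $L^\infty$ bound with any mode of convergence (e.g.\ almost everywhere) that follows from the $L^p$ statement, dominated convergence gives
\begin{equation*}
\|\mathfrak{a}_t-\mathfrak{a}_\infty\|_{L^\alpha}=\|\Lambda\Theta_t-\Lambda\Theta_\infty\|_{L^\alpha}\xrightarrow[t\to\infty]{}0,
\end{equation*}
where $\mathfrak{a}_t:=\Lambda\Theta_t+\sqrt{-1}N\,Id_E$.

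Next I would invoke the preceding lemma, which asserts that $\mathfrak{a}\mapsto\bigl(\int_X\psi_\alpha(\mathfrak{a})\,dvol\bigr)^{1/\alpha}$ is a norm on $L^\alpha(\sqrt{-1}\mathfrak{u}(E))$ equivalent to the $L^\alpha$ norm. The reverse triangle inequality for this norm then gives
\begin{equation*}
\bigl|\text{YMH}_{\alpha,N}(A_t,\phi_t)^{1/\alpha}-\text{YMH}_{\alpha,N}(A_\infty,\phi_\infty)^{1/\alpha}\bigr|\leq C\,\|\mathfrak{a}_t-\mathfrak{a}_\infty\|_{L^\alpha}\longrightarrow 0,
\end{equation*}
and since $x\mapsto x^\alpha$ is continuous on the bounded range of values involved, raising both sides to the $\alpha$-th power yields $\text{YMH}_{\alpha,N}(A_t,\phi_t)\to\text{YMH}_{\alpha,N}(A_\infty,\phi_\infty)$.

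I do not expect any serious obstacle here: the only non-trivial input is the $L^p$ convergence $\Lambda\Theta_t\to\Lambda\Theta_\infty$, which has already been quoted, and the rest is a formal manipulation with the norm equivalence. The minor point that deserves care is extracting $L^\alpha$ convergence for arbitrary $\alpha\geq 1$ from a single $L^p$ statement, which is what makes the uniform $L^\infty$ bound from the first proposition of this section indispensable.
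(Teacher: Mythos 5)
Your argument is correct and is essentially the route the paper intends: the paper offers no proof of this proposition beyond deferring to the ``parallel'' argument in \cite{DW2004}, and your combination of the uniform $L^\infty$ bound from the maximum principle applied to $\frac{\partial}{\partial t}|\Lambda\Theta|^2+\triangle_A|\Lambda\Theta|^2\leq 0$, the $L^p$ convergence $\Lambda\Theta_t\to\Lambda\Theta_\infty$, and the norm-equivalence lemma is precisely the Riemann-surface specialization of that argument. Two minor remarks: $L^p$ convergence gives almost-everywhere convergence only along a subsequence, so it is cleaner to interpolate directly via $\|f\|_{L^\alpha}^{\alpha}\leq\|f\|_{L^\infty}^{\alpha-p}\|f\|_{L^p}^{p}$ for $\alpha\geq p$ (and H\"older for $\alpha<p$); and on a Riemann surface the flow in fact converges in $C^\infty$ (the paper invokes this in Section 4), which makes the whole upgrading step unnecessary here, though your version is the one that survives in settings with weaker convergence such as K\"ahler surfaces.
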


The proof is parallel with the proof in \cite{DW2004} for the vector bundle case. The key point of introducing such kind of functional is that they can distinguish different HN type.

\begin{proposition}\label{pr:weightedYMH}
(1) If  $\overrightarrow{\mu} \leq \overrightarrow{\lambda}$, then $\psi_\alpha(\sqrt{-1}\overrightarrow{\mu}) \leq \psi_\alpha(\sqrt{-1}\overrightarrow{\lambda})$ for all $\alpha \geq 1$.
(2) Assume $\mu_n \geq 0$ and $\lambda_n \geq 0$. If $\psi_\alpha(\sqrt{-1}\overrightarrow{\mu}) = \psi_\alpha(\sqrt{-1}\overrightarrow{\lambda})$ for all $\alpha \geq 1$. then $\overrightarrow{\mu} = \overrightarrow{\lambda}$.
\end{proposition}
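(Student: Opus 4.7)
The plan is to treat the two parts separately, reducing each to a one-variable fact about finite sequences. Unwinding the definition, a skew-Hermitian matrix with eigenvalues $\sqrt{-1}\nu_j$ satisfies $\psi_\alpha(\sqrt{-1}\overrightarrow{\nu}) = \sum_j |\nu_j|^\alpha$, so the entire statement is really about two nonincreasing real $n$-tuples $\overrightarrow{\mu}$ and $\overrightarrow{\lambda}$ with common total sum.

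For part (1), I would first observe that the hypothesis $\overrightarrow{\mu} \leq \overrightarrow{\lambda}$ (equal total sums together with dominated partial sums in nonincreasing order) is precisely the classical majorization relation $\overrightarrow{\mu} \prec \overrightarrow{\lambda}$. Since $t \mapsto |t|^\alpha$ is convex on $\mathbb{R}$ for $\alpha \geq 1$, Karamata's inequality (equivalently, the Hardy-Littlewood-Polya characterization of majorization by convex test functions) immediately yields $\sum_j |\mu_j|^\alpha \leq \sum_j |\lambda_j|^\alpha$, which is the desired conclusion.

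For part (2), the assumption $\mu_n, \lambda_n \geq 0$ combined with the nonincreasing ordering makes all entries of both tuples nonnegative, so the equality $\psi_\alpha(\sqrt{-1}\overrightarrow{\mu}) = \psi_\alpha(\sqrt{-1}\overrightarrow{\lambda})$ reads $\sum_j \mu_j^\alpha = \sum_j \lambda_j^\alpha$ for every $\alpha \geq 1$. I would show that this family of identities pins down the two multisets $\{\mu_j\}$ and $\{\lambda_j\}$ by asymptotic extraction. Assume without loss that $\mu_1 \geq \lambda_1$; dividing the common power sum by $\mu_1^\alpha$ and letting $\alpha \to \infty$, the left-hand side tends to the multiplicity of $\mu_1$ in $\{\mu_j\}$, a positive integer, while the right-hand side tends to $0$ unless $\lambda_1 = \mu_1$. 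Hence $\mu_1 = \lambda_1$, and the same limit then forces the multiplicities of this common top value in the two multisets to coincide. Cancelling the matching top entries, the residual power-sum identities persist for all $\alpha \geq 1$, and an induction on the number of distinct values remaining completes the identification $\overrightarrow{\mu} = \overrightarrow{\lambda}$.

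Neither half presents a genuine obstacle: part (1) is an immediate invocation of Karamata, and part (2) is the elementary fact that the $\alpha$-th power sums for $\alpha \geq 1$ determine a finite multiset of nonnegative reals. The only subtlety worth highlighting is that the partial order defined in the previous subsection must be read as majorization (dominated partial sums with equal totals, the nonincreasing convention on components being essential for Karamata to apply without sign issues); the nonnegativity hypothesis in (2) is likewise essential, since the map $\alpha \mapsto |t|^\alpha$ ceases to separate the sign of $t$.
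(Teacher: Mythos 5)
Your proposal is correct and follows essentially the same route as the paper: part (1) is the convexity/majorization inequality (the paper cites Atiyah--Bott, Eq.~12.5, which is exactly the Hardy--Littlewood--P\'olya/Karamata fact you invoke), and part (2) is the same asymptotic extraction of the top entries by letting $\alpha \to \infty$ in a ratio of power sums, the paper working from the first differing index while you work from the largest value. The only cosmetic difference is that the paper inserts an (unnecessary) analytic-continuation step before taking the limit.
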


\begin{proof}
(1) follows from \cite{AB83}, Equation 12.5. For (2), consider
$f(\alpha) = \psi_\alpha(\sqrt{-1}\overrightarrow{\mu})$ and
$g(\alpha) = \psi_\alpha(\sqrt{-1}\overrightarrow{\lambda})$ as
functions of $\alpha$. As complex valued functions, $f$ and $g$
clearly have analytic extensions to $\mathbb{C} \backslash \{\alpha
\leq 0\}$. Suppose that $f(\alpha) = g(\alpha)$ for all $\alpha \geq
1$. Then by analyticity, $f(\alpha) = g(\alpha)$ for all $\mathbb{C}
\backslash \{\alpha \leq 0\}$. If $\overrightarrow{\mu} \neq
\overrightarrow{\lambda}$, then there is some $k$, $1 \leq k \leq
n$, such that $\mu_i = \lambda_i$ for $i < k$, and $\mu_k \neq
\lambda_k$; say, $\mu_k > \lambda_k$.

Then for any $\alpha > 0$:
\begin{equation*}
(\frac{\mu_k}{\lambda_k})^\alpha \leq \sum_{i=k}^n (\frac{\mu_i}{\lambda_k})^\alpha =\sum_{i=k}^n (\frac{\lambda_i}{\lambda_k})^\alpha \leq n,
\end{equation*}
where the middle equality follows from $f(\alpha) = g(\alpha)$ and $\mu_i = \lambda_i$ for $i < k$. Letting $\alpha \rightarrow \infty$, we obtain a contradiction.
\end{proof}

\subsection{Approximate critical Hermitian structure}

To prove the gradient flow preserving HN type, we need equality
$\text{YMH}_{\alpha,N}(A_\infty,\phi_\infty)=\text{YMH}_{\alpha,N}(\overrightarrow{\mu}_0)$.
Although the weighted YMH functional is non-increasing and the HN
type is non-decreasing, there still may be some jumping phenomenon
illustrated in following figure
\begin{figure}[h]
\includegraphics{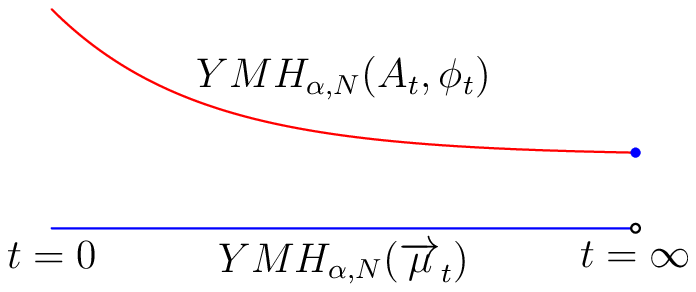}
\end{figure}

\noindent We need another tool, namely approximate critical
Hermitian structure on Higgs bundle, to show there is in fact no gap
between $\text{YMH}_{\alpha,N}(A_\infty,\phi_\infty)$ and
$\text{YMH}_{\alpha,N}(\overrightarrow{\mu}_0)$.

The following is the Higgs version of definition introduced in
\cite{DW2004} for vector bundle. Fix a Higgs bundle $(E,
\bar{\partial},\phi)$ and a Hermitian metric $H$. Let
$\{E_i\}_{i=1}^l$ be the HN filtration. Associated to each $E_i$ the
unitary projection $\pi^H_i$ from $E$ to $E_i$. For convenience, we
set $\pi^H_0 = 0$. The $\pi^H_i$ are bounded $L^2$ Hermitian
endomorphisms. Then the Harder-Narasimhan projection, $\Psi^{hn}
(E,\bar{\partial},\phi,H)$, is defined by $\sum_{i=1}^l \mu_i
(\pi^H_i-\pi^H_{i-1})$, which is a bounded $L^2$ Hermitian
endomorphism.

\begin{definition}
Fix $\delta > 0$ and $1 \leq p \leq \infty$. An
$L^p$-$\delta$-approximate critical Hermitian structure on a Higgs
bundle E is a smooth metric $H$ such that
\begin{equation*}
||\sqrt{-1}\Lambda \Theta_{(\bar{\partial},\phi,H)}-\Psi^{hn} (E,\bar{\partial},\phi,H)||_{L^p} \leq \delta.
\end{equation*}
\end{definition}

\begin{theorem}\label{th:approExist}
For any $\delta > 0$, there is an $L^\infty$-$\delta$-approximate
critical Hermitian structure $H$ on $(E,\bar{\partial},\phi)$.
\end{theorem}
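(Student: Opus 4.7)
The plan is to prove the theorem by induction on the length $r$ of the HN filtration of $(E,\bar\partial,\phi)$. The base case $r=1$ reduces to the existence of approximate Hermitian--Einstein metrics on semistable twist Higgs bundles: when the bundle is semistable one has $\Psi^{hn}(E,\bar\partial,\phi,H)=\mu\cdot Id_E$ independent of $H$, and it suffices to find $H$ with $\|\sqrt{-1}\Lambda\Theta_H-\mu Id_E\|_{L^\infty}\le\delta$. I would invoke the heat flow \eqref{eq:heatflow}, whose long-time existence and convergence in the twist case is established in \cite{WZ11twisted}: starting from any smooth initial metric, the flow $H(t)$ satisfies $\sqrt{-1}\Lambda\Theta_{H(t)}\to\mu Id_E$ in $L^\infty$ as $t\to\infty$ (on the polystable graded Seshadri object the limit endomorphism is exactly $\mu Id$), so $H=H(T)$ works for $T$ sufficiently large.

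For the inductive step ($r\ge 2$) I would exploit the maximal destabilizing short exact sequence $0\to E_1\to E\to Q\to 0$, with $E_1$ semistable of slope $\mu_1$ and $Q=E/E_1$ of HN-filtration length $r-1$. Fix a $C^\infty$ splitting $E\cong E_1\oplus Q$, so that
\[\bar\partial_E=\begin{pmatrix}\bar\partial_{E_1}&\beta\\0&\bar\partial_Q\end{pmatrix},\qquad\phi=\begin{pmatrix}\phi_{E_1}&\gamma\\0&\phi_Q\end{pmatrix},\]
with $\beta\in\Omega^{0,1}(Hom(Q,E_1))$ and $\gamma\in\Omega^{1,0}(Hom(Q,E_1)\otimes L)$. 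By the base case applied to $E_1$ and induction applied to $Q$, for any $\epsilon>0$ I choose $L^\infty$-$\epsilon$-approximate critical metrics $H_1$ on $E_1$ and $H_2$ on $Q$, and form the one-parameter family $K_t=H_1\oplus t^2H_2$. A block computation using $\beta^{*K_t}=t^{-2}\beta^{*H_2}$ and $\gamma^{*K_t}=t^{-2}\gamma^{*H_2}$ shows that the diagonal blocks of $\sqrt{-1}\Lambda\Theta_{K_t}$ equal $\sqrt{-1}\Lambda\Theta_{H_1}+O(t^{-2})$ and $\sqrt{-1}\Lambda\Theta_{H_2}+O(t^{-2})$, which by the inductive choice lie within $\epsilon+O(t^{-2})$ of the block-diagonal HN projection $\Psi^{hn}(E,K_t)=\mu_1 Id_{E_1}\oplus\Psi^{hn}(Q,H_2)$ in the $L^\infty$ norm.

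The remaining task is to control the off-diagonal block of $\sqrt{-1}\Lambda\Theta_{K_t}$, which modulo $O(t^{-2})$ equals $\sqrt{-1}\Lambda D\beta+\sqrt{-1}\Lambda(\gamma\wedge\phi_Q^{*H_2}+\phi_{E_1}^{*H_1}\wedge\gamma)$ and is manifestly $t$-independent. To kill it, I would exploit the residual gauge freedom $\beta\mapsto\beta+\bar\partial_{Hom}\sigma$, $\gamma\mapsto\gamma+[\phi,\sigma]$ for $\sigma\in\Omega^0(Hom(Q,E_1))$, which parameterizes the non-uniqueness of the smooth splitting. Picking $\sigma$ so that $\beta$ becomes the harmonic representative of its Dolbeault class with respect to the metric on $Hom(Q,E_1)$ induced by $H_1,H_2$, the K\"ahler identity $[\Lambda,\partial]=-i\bar\partial^*$ on the Riemann surface (noting $\Lambda\beta=0$ for type reasons) yields $\sqrt{-1}\Lambda D\beta=\bar\partial^*\beta=0$ pointwise. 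A parallel elliptic argument, exploiting the Higgs compatibility $d_A''\phi=0$ (which links $\gamma$ to $\beta$ through a first-order equation), controls the Higgs cross-term in the same gauge. Sending $\epsilon\to 0$ first and then $t\to\infty$ produces the required $L^\infty$-$\delta$-approximate critical Hermitian structure.

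The principal technical obstacle is precisely this off-diagonal control. The scaling $K_t$ suppresses only the adjoint-type corrections $\beta\wedge\beta^{*K_t}$ and $\gamma\wedge\gamma^{*K_t}$; neither $\Lambda D\beta$ nor $\Lambda(\gamma\wedge\phi_Q^{*H_2}+\phi_{E_1}^{*H_1}\wedge\gamma)$ decays with $t$, so one must combine the scaling with a harmonic-gauge choice. The harmonic-gauge trick for $\beta$ is direct from the K\"ahler identity, but arranging $\gamma$ to be simultaneously well-behaved in the same gauge (subject to $d_A''\phi=0$) is the delicate point, as it requires analyzing a Higgs-twisted Dolbeault complex rather than a single $\bar\partial$-complex. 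Upgrading the $L^2$ orthogonality obtained from Hodge theory to the needed $L^\infty$ bound follows from standard elliptic regularity on the compact Riemann surface $X$, and the bookkeeping that makes the three error sources (inductive $\epsilon$, scaling $O(t^{-2})$, and residual off-diagonal) combine to less than $\delta$ is then routine.
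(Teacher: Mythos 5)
Your overall strategy is the same as the paper's (which follows Daskalopoulos--Wentworth): handle the semistable case by the flow, then build a metric on the associated graded of the HN filtration and scale the extension data. The base case and the choice $K_t=H_1\oplus t^2H_2$ are fine. But there is a genuine problem in your inductive step, and it is exactly at the place you flag as ``the principal technical obstacle'': your harmonic-gauge argument for the Higgs cross-term $\sqrt{-1}\Lambda(\gamma\wedge\phi_Q^{*H_2}+\phi_{E_1}^{*H_1}\wedge\gamma)$ is not carried out --- you only assert that ``a parallel elliptic argument'' controls it, and there is no reason the gauge $\sigma$ that makes $\beta$ harmonic simultaneously kills this term (it is constrained by $\bar\partial_{Hom}\gamma=[\text{stuff},\beta]$ coming from $d_A''\phi=0$, not freely adjustable). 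As written, the proof is incomplete at that point.

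The good news is that the obstacle is illusory, so the whole harmonic-gauge apparatus can be discarded. You computed the off-diagonal block of $\sqrt{-1}\Lambda\Theta_{K_t}$ in a $t$-independent norm; but in the definition of an approximate critical structure the endomorphism norm is the one induced by the metric $K_t$ itself. For a block endomorphism of $E_1\oplus Q$ with metric $H_1\oplus t^2H_2$, the $(1,2)$ block $Q\to E_1$ has $|\cdot|_{K_t}=t^{-1}|\cdot|_{H}$, so the $t$-independent quantities $\sqrt{-1}\Lambda\partial_{Hom}\beta$ and $\sqrt{-1}\Lambda(\gamma\wedge\phi_Q^{*}+\phi_{E_1}^{*}\wedge\gamma)$ contribute $O(t^{-1})$, while the $(2,1)$ block, which is $O(t^{-2})$ in the $H$-norm, also contributes $O(t^{-1})$ after the factor $t$ from the norm (consistent with self-adjointness). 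Equivalently --- and this is how the paper phrases it in \eqref{eq:approGauge} --- keep the metric fixed and apply the constant complex gauge transformation $g_t=\mathrm{diag}(1,t^{-1})$, which replaces $\beta$ by $t^{-1}\beta$ and $\gamma$ by $t^{-1}\gamma$; then all off-diagonal blocks of $\sqrt{-1}\Lambda\Theta$ are manifestly $O(t^{-1})$ and the diagonal corrections $\beta\wedge\beta^{*}$, $\gamma\wedge\gamma^{*}$ are $O(t^{-2})$, so choosing $\delta'$ small and then $t$ large gives the estimate directly. This is the paper's ``appropriately scaling the extension classes.'' Two minor further remarks: the paper splits $E\cong Q_1\oplus\cdots\oplus Q_l$ all at once rather than inducting on two-term extensions (cosmetic), and your phrase ``sending $\epsilon\to0$ first and then $t\to\infty$'' should be the choice of $\epsilon$ small followed by $t$ large depending on nothing but the $O(t^{-1})$ constants.
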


\begin{proof}

First, by the equivalence of holomorphic structures $\bar{\partial}$
and the unitary connections $A$, it suffices to show that for a
fixed Hermitian metric $H$ there is a smooth complex gauge
transformation $g$ preserving the HN filtration such that:
\begin{equation}\label{eq:approGauge}
||\sqrt{-1}\Lambda \Theta_{(g(\bar{\partial},\phi),H)}-\Psi^{hn}
(g(\bar{\partial},\phi),H)||_{L^\infty} \leq \delta.
\end{equation}

Next, for semistable $E$ (i.e. the length 1 case), the result
follows by the convergence $||\sqrt{-1}\Lambda
\Theta_{(A_t,\phi_t)}-\mu(E)Id_E||_{L^\infty} \rightarrow 0$, where
$(A_t,\phi_t)$ is a solution to the gradient flow Equation
\eqref{eq:gradientflow} with any initial condition (cf.
\cite{Don85anti}, Cor.25, here the Higgs case is similar). With this
understood, choose $\delta'$-approximate metrics, where $0 <
\delta'<< \delta$, on the semistable quotients $Q_i$ of the HN
filtration of $E$ to fix a metric $H$ on $E = Q_1\oplus\cdots\oplus
Q_l$. Then by appropriately scaling the extension classes: $0
\rightarrow E_{i-1} \rightarrow E_i \rightarrow Q_i \rightarrow 0$,
one finds a complex gauge transformation satisfying
\eqref{eq:approGauge}. We omit the details.
\end{proof}

By the equivalence of $L^p$ norm and the weighted YMH functional, we have

\begin{corollary}\label{co:approStr}
Let $E$ be a Higgs bundle of HN type $\overrightarrow{\mu}_0$. There is $\alpha_0 > 1$ such that the following holds: given any $\delta > 0$ and any $N$, there is a Hermitian metric $H$ on $E$ such that
\begin{equation*}
\text{YMH}_{\alpha,N}(E,\bar{\partial},\phi) \leq
\text{YMH}_{\alpha,N}(\overrightarrow{\mu}_0) + \delta, \ \
\text{for all} \ \ 1 \leq \alpha \leq \alpha_0.
\end{equation*}
\end{corollary}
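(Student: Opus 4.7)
The plan is to take $H$ to be an approximate critical Hermitian structure produced by Theorem \ref{th:approExist}, and then to translate the resulting $L^\infty$-control between $\sqrt{-1}\Lambda\Theta_H$ and the Harder--Narasimhan projection $\Psi^{hn}$ into control of the weighted YMH functional via the norm equivalence between $\mathfrak{a} \mapsto (\int_X \psi_\alpha(\mathfrak{a})\,dvol)^{1/\alpha}$ and the $L^\alpha$ norm established in the preceding lemma. I would fix $\alpha_0 > 1$ arbitrarily at the outset (for instance $\alpha_0 = 2$), and then, given $\delta > 0$ and $N \in \mathbb{R}$, apply Theorem \ref{th:approExist} with a tolerance $\delta' > 0$ (to be chosen in terms of $\delta$, $N$, $\alpha_0$, and $\overrightarrow{\mu}_0$), producing a smooth Hermitian metric $H$ on $E$ whose Chern connection $A = (\bar\partial, H)$ satisfies
\begin{equation*}
\bigl\|\sqrt{-1}\Lambda\Theta_{(A,\phi)} - \Psi^{hn}(E,\bar\partial,\phi,H)\bigr\|_{L^\infty} \leq \delta'.
\end{equation*}

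The next step is the key observation that the Harder--Narasimhan projection $\Psi^{hn} = \sum_{i=1}^l \mu_i(\pi_i^H - \pi_{i-1}^H)$ is Hermitian with pointwise eigenvalues (counted with multiplicity) equal to the entries of $\overrightarrow{\mu}_0$. Consequently, $-\sqrt{-1}\Psi^{hn} + \sqrt{-1}N\,\text{Id}_E$ has the same pointwise spectrum as the constant skew-Hermitian matrix $\sqrt{-1}(\overrightarrow{\mu}_0 + N)$ used to define $\text{YMH}_{\alpha,N}(\overrightarrow{\mu}_0)$. Hence, up to the volume normalization implicit in that definition,
\begin{equation*}
\int_X \psi_\alpha\bigl(-\sqrt{-1}\Psi^{hn}+\sqrt{-1}N\,\text{Id}_E\bigr)\,dvol = \text{YMH}_{\alpha,N}(\overrightarrow{\mu}_0).
\end{equation*}

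With this identity in hand, the triangle inequality for the norm $\mathfrak{a} \mapsto (\int_X \psi_\alpha(\mathfrak{a})\,dvol)^{1/\alpha}$, combined with its equivalence (with constant $C_\alpha$) with the $L^\alpha$ norm, yields
\begin{equation*}
\text{YMH}_{\alpha,N}(A,\phi)^{1/\alpha} \leq \text{YMH}_{\alpha,N}(\overrightarrow{\mu}_0)^{1/\alpha} + C_\alpha\,\delta'\,\text{Vol}(X)^{1/\alpha}.
\end{equation*}
Raising to the $\alpha$-th power and applying the elementary inequality $(a+b)^\alpha \leq a^\alpha + C(\alpha,a)\,b$, valid for $b$ small when $a$ is bounded, converts this into $\text{YMH}_{\alpha,N}(A,\phi) \leq \text{YMH}_{\alpha,N}(\overrightarrow{\mu}_0) + \delta$, provided $\delta'$ is chosen sufficiently small.

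The only delicate point, and the reason the statement requires fixing $\alpha_0$ in advance, is the uniformity over $\alpha \in [1,\alpha_0]$: both the norm-equivalence constant $C_\alpha$ and the implicit constant in the inequality $(a+b)^\alpha \leq a^\alpha + C(\alpha,a)\,b$ depend continuously on $\alpha$, hence are bounded on the compact interval $[1,\alpha_0]$; but they blow up as $\alpha \to \infty$, so no single $\delta'$ works for all $\alpha \geq 1$ simultaneously. Once $\alpha_0$ is fixed, however, this uniformity is automatic, and the substantive analytic content of the corollary is already contained in Theorem \ref{th:approExist}.
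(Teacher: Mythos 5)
Your proposal is correct and follows exactly the route the paper intends: the paper deduces the corollary in one line from Theorem \ref{th:approExist} together with the lemma on the equivalence of the $\psi_\alpha$-norm with the $L^\alpha$ norm, and your write-up simply supplies the omitted details (the identification of the spectrum of $\Psi^{hn}$ with the entries of $\overrightarrow{\mu}_0$, the triangle inequality for the $\psi_\alpha$-norm, and the uniformity of the constants over the compact interval $[1,\alpha_0]$). The only cosmetic wrinkle is a sign/normalization ambiguity in matching $-\sqrt{-1}\Psi^{hn}+\sqrt{-1}N\,\mathrm{Id}_E$ with $\sqrt{-1}(\overrightarrow{\mu}_0+N)$, which is already present in the paper's own conventions and is immaterial since $N$ is arbitrary.
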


\subsection{Proof of theorem}

With these preparation in hand, we can prove Theorem \ref{th:preserveHN} by using the approximate critical Hermitian structure on Higgs bundle to eliminate the possibility of the jumping phenomenon,
\begin{lemma}
\begin{equation*}
\lim_{t \rightarrow \infty} \text{YMH}_{\alpha,N}(A_t,
\phi_t)=\text{YMH}_{\alpha,N}(\overrightarrow{\mu}_0).
\end{equation*}
\end{lemma}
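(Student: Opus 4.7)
The plan is to deduce the displayed equality by proving each inequality separately, using three ingredients already at hand: the non-decreasing of the HN type (Proposition~\ref{pr:nondecreasingHN}), the non-increasing of the weighted YMH along the flow (Proposition~\ref{pr:nonincreasingWYMH}), and the existence of approximate critical Hermitian structures (Corollary~\ref{co:approStr}).

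The lower bound $\text{YMH}_{\alpha,N}(\overrightarrow{\mu}_0) \le \lim_{t\to\infty} \text{YMH}_{\alpha,N}(A_t,\phi_t)$ is direct. Because $(A_\infty,\phi_\infty)$ is a critical point, the splitting proposition ending Section~2.3 gives a $\phi_\infty$-invariant orthogonal decomposition of $(E,d_{A_\infty}'',\phi_\infty)$ into polystable summands on each of which $\sqrt{-1}\Lambda\Theta_\infty$ is a constant multiple of the identity equal to the slope. Collecting these slopes with multiplicities yields the HN type $\overrightarrow{\lambda}_\infty$ of the limit, so $\text{YMH}_{\alpha,N}(A_\infty,\phi_\infty)=\text{YMH}_{\alpha,N}(\overrightarrow{\lambda}_\infty)$. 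Combining $\overrightarrow{\mu}_0\le\overrightarrow{\lambda}_\infty$ from Proposition~\ref{pr:nondecreasingHN} with Proposition~\ref{pr:weightedYMH}(1) then gives the inequality.

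For the upper bound, I will fix $\delta>0$ and apply Corollary~\ref{co:approStr} to $(E,d_{A_0}'',\phi_0)$ to choose a Hermitian metric $H_\delta$ with
\[
\text{YMH}_{\alpha,N}(E,d_{A_0}'',\phi_0,H_\delta) \le \text{YMH}_{\alpha,N}(\overrightarrow{\mu}_0)+\delta
\]
simultaneously for $\alpha\in[1,\alpha_0]$. Since any two Hermitian metrics on a fixed Higgs bundle differ by a complex gauge transformation $g_\delta$ (with $g_\delta^{*}H_0 g_\delta = H_\delta$), setting $(A_\delta,\phi_\delta):=g_\delta\cdot(A_0,\phi_0)$ transfers the bound to the fixed-$H_0$ picture. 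Running the gradient flow from $(A_\delta,\phi_\delta)$ and invoking Proposition~\ref{pr:nonincreasingWYMH}, the limit $(A_\infty^\delta,\phi_\infty^\delta)$ satisfies the same bound, and the lower-bound argument applied to this alternative flow yields
\[
\text{YMH}_{\alpha,N}(\overrightarrow{\mu}_0)\le \text{YMH}_{\alpha,N}(\overrightarrow{\lambda}_\infty^\delta)\le \text{YMH}_{\alpha,N}(\overrightarrow{\mu}_0)+\delta
\]
for every $\alpha\in[1,\alpha_0]$. Proposition~\ref{pr:weightedYMH}(2), together with the boundedness of admissible HN types for fixed rank and degree, then forces $\overrightarrow{\lambda}_\infty^\delta=\overrightarrow{\mu}_0$ once $\delta$ is sufficiently small.

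The main obstacle is to transfer this conclusion back to the original flow, since the gradient equation with $H_0$ fixed is not equivariant under complex gauge transformations, so the original and the $\delta$-flows could a priori converge to different critical points. My plan is to combine the continuous dependence of the flow on initial data in $H^k$ over any finite time interval (the fourth bullet in Section~2.3) with a diagonal/approximation argument: along a sequence $\delta_k\to 0$ the pairs $(A_{\delta_k},\phi_{\delta_k})$ lie in the same complex orbit as $(A_0,\phi_0)$ and can be connected to it by continuous paths of complex gauge transformations; using continuous dependence on bounded time intervals and the discreteness of admissible HN types, one should deduce $\text{YMH}_{\alpha,N}(A_\infty,\phi_\infty)\le\text{YMH}_{\alpha,N}(\overrightarrow{\mu}_0)$ for all $\alpha\in[1,\alpha_0]$. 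Together with the lower bound, this closes the proof and, via Proposition~\ref{pr:weightedYMH}(2), yields $\overrightarrow{\lambda}_\infty=\overrightarrow{\mu}_0$.
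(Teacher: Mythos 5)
Your lower bound and your treatment of the special initial data coming from Corollary \ref{co:approStr} reproduce the paper's Step 1: the squeeze between the non-decreasing HN type (Proposition \ref{pr:nondecreasingHN} via Proposition \ref{pr:weightedYMH}(1)) and the non-increasing weighted functional (Proposition \ref{pr:nonincreasingWYMH}), together with the discreteness gap $\delta_0$, does give $\text{YMH}_{\alpha,N}(A_\infty^\delta,\phi_\infty^\delta)=\text{YMH}_{\alpha,N}(\overrightarrow{\mu}_0)$ for the flow started at the approximate critical metric. The genuine gap sits exactly where you write ``one should deduce'': transferring this conclusion back to the original flow. Finite-time continuous dependence on initial data cannot do this by itself, because the quantity to be controlled is the value of $\text{YMH}_{\alpha,N}$ at $t=\infty$, and the neighbourhoods on which two flows stay close shrink as the time horizon grows; the jump you are trying to exclude occurs precisely in the limit. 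Moreover, the gauge transformations $g_{\delta_k}$ produced by Theorem \ref{th:approExist} do not tend to the identity as $\delta_k\to 0$ (they degenerate, since the graded object is not in the complex gauge orbit of $(A_0,\phi_0)$ unless $E$ already splits), so there is no literal diagonal argument: you only have paths in the space of metrics joining the two initial conditions, and you must show that the good property propagates along such a path. Discreteness of HN types does not do this for you, because the set of initial metrics whose flow limit has the wrong type is not obviously open.

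The paper closes this gap with an open--closed--connected argument on the space of smooth metrics. Let $\mathcal{H}_\delta$ be the set of metrics $H$ for which $\text{YMH}_{\alpha,N}(A_t^H,\phi_t)<\text{YMH}_{\alpha,N}(\overrightarrow{\mu}_0)+\delta$ for all large $t$. Nonemptiness is your Step 1; openness does follow from finite-time continuity together with monotonicity (once the flow from $H$ dips below the threshold at some finite $T$, nearby metrics dip below at time $T$ and stay below). The essential missing ingredient is closedness: given $H_j\to K$ with $H_j\in\mathcal{H}_\delta$, one chooses times $t_j\geq T_j$ and proves that the limits of $(A^{H_j}_{t_j},\phi_{t_j})$ and of $(A^{K}_{t_j},\phi^K_{t_j})$ coincide, using the Higgs version of the Donaldson functional. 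Without this identification of the two limits (or some substitute giving uniform control of the flow at infinity), your argument does not close; with it, connectedness of the space of metrics finishes the proof exactly as in the paper.
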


The proof is divided into two parts.

\noindent \textbf{Step1}, for fixed $\alpha$ and fixed $N$, define $\delta_0 > 0$ by:
\begin{equation}\label{eq:definitionDelta0}
 2 \delta_0 + \text{YMH}_{\alpha,N}(\overrightarrow{\mu}_0) = \min\{\text{YMH}_{\alpha,N}(\overrightarrow{\mu}) : \text{YMH}_{\alpha,N}(\overrightarrow{\mu}) > \text{YMH}_{\alpha,N}(\overrightarrow{\mu}_0)\},
\end{equation}
where $\overrightarrow{\mu}$ runs over all possible HN types of
Higgs bundles on $X$ with the rank of $(E,\bar{\partial},\phi_0)$.
For $\overrightarrow{\mu}$ is discrete, $\delta_0$ always exists. By
corollary \ref{co:approStr}, consider metrics $H$ on $E$ with
associated connection $A_0 = (\bar{\partial},H)$ satisfying:
\begin{equation}\label{eq:initialcon0}
\text{YMH}_{\alpha,N}(A_0,\phi_0) \leq
\text{YMH}_{\alpha,N}(\overrightarrow{\mu}_0) + \delta_0.
\end{equation}
Let $(A_\infty, \phi_\infty)$ be the limit along the flow with
initial condition $(A_0,\phi_0)$. Then combining
Prop.\ref{pr:nondecreasingHN}, Prop.\ref{pr:weightedYMH} (1), and
Prop.\ref{pr:nonincreasingWYMH}, we have:
\begin{equation*}
\text{YMH}_{\alpha,N}(\overrightarrow{\mu}_0) \leq
\text{YMH}_{\alpha,N}(A_\infty,\phi_\infty) \leq
\text{YMH}_{\alpha,N}(A_0,\phi_0)\leq
\text{YMH}_{\alpha,N}(\overrightarrow{\mu}_0)+\delta_0 .
\end{equation*}
By Equation \eqref{eq:definitionDelta0} the definition of
$\delta_0$, we must have
$\text{YMH}_{\alpha,N}(A_\infty,\phi_\infty)=
\text{YMH}_{\alpha,N}(\overrightarrow{\mu}_0)$. This shows that the
result holds for initial conditions satisfying (4.4).

\noindent \textbf{Step2}, in the following, we want to show for any
initial data, after long enough time,
$\text{YMH}_{\alpha,N}(A_t,\phi_t)$ will approach
$\text{YMH}_{\alpha,N}(\overrightarrow{\mu}_0)$ sufficient close,
then reducing the problem to step1. More precisely, let us denote by
$(A^H_t, \phi_t)$ the solution to the YMH flow at time $t$ with
initial condition $A_0=(\bar{\partial},H)$. We are going to prove
that for any $H$ and any $\delta > 0$, there is $T \geq 0$ such
that:
\begin{equation}\label{eq:initialconH}
\text{YMH}_{\alpha,N}(A_t,\phi_t) <
\text{YMH}_{\alpha,N}(\overrightarrow{\mu}_0) + \delta, \ \
\text{for all} \ \  t \geq T.
\end{equation}
Without loss of generality, assume $0 < \delta \leq \delta_0/2$. Let
$\mathcal{H}_\delta$ denote the set of smooth hermitian metrics $H$
on $E$ with the property that \eqref{eq:initialconH} holds for
$(A_t^H, \phi)$ and some $T$. We employ open and closeness argument
to show $\mathcal{H}_\delta$ containing all the smooth Hermitian
metric.

First, $\mathcal{H}_\delta$ is non-empty. Indeed, any metric
satisfying Equation \eqref{eq:initialcon0} is in
$\mathcal{H}_\delta$, and according to Theorem \ref{th:approExist},
we may always find such kind of metric.

Second, $\mathcal{H}_\delta$ is open. This is an easy consequence of
the continuous dependence on the initial data  in finite time of the
flow in the $C^\infty$ topology.

Third, $\mathcal{H}_\delta$ is closed. Let $H_j$ be a sequence of
smooth Hermitian metrics on $E$ such that each $H_j \in
\mathcal{H}_\delta$, and suppose $H_j \rightarrow K$, in the
$C^\infty$ topology, for some metric $K$. We want to show that $K
\in \mathcal{H}_\delta$

Since $H_j \in \mathcal{H}_\delta$, we have a sequence $T_j$ such
that for all $t \geq T_j$:
\begin{equation*}
\text{YMH}_{\alpha,N}(A^{H_j}_t,\phi_t) \leq
\text{YMH}_{\alpha,N}(A^{H_j}_{T_j},\phi_{T_j})
\text{YMH}_{\alpha,N}(\overrightarrow{\mu}_0) + \delta.
\end{equation*}
We may find a sequence $t_j \geq T_j$, s.t. $(A^{H_j}_{t_j}, \phi^{H_j}_{t_j}) \rightarrow (A^{(1)}_\infty, \phi^{(1)}_\infty)$ in $L^p$ for all $p$. At another hand, $(A^K_{t_j}, \phi^K_{t_j}) \rightarrow (A^{(2)}_\infty, \phi^{(2)}_\infty)$.

Using Higgs version of Donaldson's functional(cf. \cite{Don85anti}, or the Higgs case \cite{LZ2011}), it is not difficult to show
\begin{equation*}
(A^{(1)}_\infty, \phi^{(1)}_\infty)=(A^{(2)}_\infty, \phi^{(2)}_\infty).
\end{equation*}
Thus set $(A_\infty, \phi_\infty)=(A^{(1)}_\infty,
\phi^{(1)}_\infty)=(A^{(2)}_\infty, \phi^{(2)}_\infty)$, then
\begin{equation*}
\lim_{j \rightarrow
\infty}\text{YMH}_{\alpha,N}(A^{H_j}_{T_j},\phi_{T_j})= \lim_{j
\rightarrow \infty}\text{YMH}_{\alpha,N}(A^K_{t_j},
\phi^K_{t_j})=\text{YMH}_{\alpha,N}(A_\infty, \phi_\infty).
\end{equation*}

Hence, for $j$ sufficiently large:
\begin{equation*}
\begin{split}
&\text{YMH}_{\alpha,N}(A^K_{t_j}, \phi^K_{t_j}) \leq \text{YMH}_{\alpha,N}(A_\infty, \phi_\infty)+\delta\\
=&\lim_{j \rightarrow \infty}\text{YMH}_{\alpha,N}(A^{H_j}_{T_j},\phi_{T_j})+\delta \leq  \text{YMH}_{\alpha,N}(\overrightarrow{\mu}_0) + 2\delta\\
\leq & \text{YMH}_{\alpha,N}(\overrightarrow{\mu}_0) + \delta_0.
\end{split}
\end{equation*}
Therefore, $K \in \mathcal{H}_\delta$.

Since the space of smooth metrics is connected, we conclude that
every metric is in $\mathcal{H}_\delta$, and \eqref{eq:initialconH}
holds for all $\delta > 0$ and all metric $H$. In particular, we can
choose $\delta \leq \delta_0$ and conclude that
\begin{equation*}
\lim_{t\rightarrow\infty} \text{YMH}_{\alpha,N}(A^H_t,\phi^H_t) =
\text{YMH}_{\alpha,N} (\overrightarrow{\mu}_0), \ \text{for any} \
H.
\end{equation*}
Since the choice of $N$ was arbitrary, the proof of Lemma is
complete, and Theorem \ref{th:preserveHN} follows.

\section{Convergence to the graded object}

We will finish the proof of Theorem \ref{th:convergeGR} in this
section.


The proof mainly follows the argument in the proof of Theorem 5.3 in
\cite{Wil06morse}. For $d_{A_j}+\phi_j+\phi^{*H}_j$ is no longer a
connection, there are two major differences. One is that we should
modify the Chern-Weil formula, another is using the unitary
connection $d_A$ to build the Sobolev space to get the convergence.

Theorem \ref{th:preserveHN} already shows that the type of the
Harder-Narasimhan filtration is preserved in the limit. We want to
show that the destabilising Higgs sub-bundles in the
Harder-Narasimhan filtration along the gradient flow also converge
to the destabilising Higgs sub-bundles of the limiting Higgs pair.
This fact will lead us to the construction of the holomorphic
morphism between $Gr^{hns}(E,d_{A_0}'',\phi_0)$ to
$(E,d_{A_\infty}'',\phi_\infty)$. In the following we use the
projection $\pi : E \rightarrow E$ to denote the sub-bundle
$\pi(E)$.

\begin{proposition}\label{pr:limitHNfiltration}
Let $\{\pi^{(i)}_t\}$ be the HN filtration of a solution $(A_t, \phi_t)$ to the gradient flow equations \eqref{eq:gradientflow}, and let $\{\pi^{(i)}_\infty\}$ be the HN filtration of the limit $(A_\infty, \phi_\infty)$. Then there exists a subsequence $\{t_j\}$ such that $\pi^{(i)}_{t_j}\rightarrow \pi^{(i)}_\infty$ in $L^2$ for all $i$.
\end{proposition}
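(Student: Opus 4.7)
My strategy is to use the twisted Chern-Weil formula \eqref{eq:chernweil} to obtain a uniform $H^1$ bound on the unitary projections $\pi^{(i)}_t$, extract a subsequence converging in $L^2$ via Rellich-Kondrachov, and then identify the weak limit with $\pi^{(i)}_\infty$ by invoking uniqueness of the HN filtration of $(E,d_{A_\infty}'',\phi_\infty)$.

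\emph{Uniform estimates.} Along the flow $(A_t,\phi_t)=g_t\cdot(A_0,\phi_0)$ is complex gauge equivalent to the initial data, so each $E^{(i)}_t$ has constant rank and degree, and by Theorem \ref{th:preserveHN} these also match the HN filtration of the limit. Since $\|\pi^{(i)}_t\|_{L^\infty}\leq 1$ and $\|\Lambda\Theta_t\|^2_{L^2}=\text{YMH}(A_t,\phi_t)$ is non-increasing, rewriting \eqref{eq:chernweil} as
\begin{equation*}
\|d_{A_t}''\pi^{(i)}_t\|^2+\|[\phi_t,\pi^{(i)}_t]\|^2=\int_X\text{Tr}(\sqrt{-1}\Lambda\Theta_t\,\pi^{(i)}_t)\,dvol-2\pi\deg E^{(i)}
\end{equation*}
yields uniform $L^2$ bounds on both $d_{A_t}''\pi^{(i)}_t$ and $[\phi_t,\pi^{(i)}_t]$; together with self-adjointness this gives a uniform $H^1_{A_t}$ bound on $\pi^{(i)}_t$, and smooth convergence $A_t\to A_\infty$ makes the norms $H^1_{A_t}$ uniformly equivalent to a fixed reference $H^1_{A_\infty}$ norm.

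\emph{Extraction and identification.} Rellich-Kondrachov plus a diagonal argument over the finitely many indices $i$ produces a subsequence $t_j$ along which $\pi^{(i)}_{t_j}\to\tilde\pi^{(i)}$ weakly in $H^1$, strongly in $L^2$, and pointwise a.e. Strong $L^2$ convergence passes through the pointwise identities $\pi^2=\pi=\pi^*$ and the nesting relations $\pi^{(i-1)}_t=\pi^{(i-1)}_t\pi^{(i)}_t$, so $\{\tilde\pi^{(i)}\}$ is again a filtration by orthogonal projections, and continuity of the trace forces $\text{rank}\,\tilde\pi^{(i)}=\text{rank}\,E^{(i)}$. Passing the defining relations $(1-\pi^{(i)}_{t_j})d_{A_{t_j}}''\pi^{(i)}_{t_j}=0$ and $(1-\pi^{(i)}_{t_j})[\phi_{t_j},\pi^{(i)}_{t_j}]=0$ to the weak limit, using the smooth convergence $A_{t_j}\to A_\infty$, $\phi_{t_j}\to\phi_\infty$ summarised after \eqref{eq:heatflow}, shows that each $\tilde\pi^{(i)}$ cuts out a $\phi_\infty$-invariant weakly holomorphic subbundle of $(E,d_{A_\infty}'',\phi_\infty)$, which on a Riemann surface is automatically a genuine holomorphic subbundle. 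Applying \eqref{eq:chernweil} in the limit with weak lower semicontinuity of the $L^2$ norms yields $\deg\,\text{Im}\,\tilde\pi^{(i)}\geq\deg E^{(i)}$, and combined with the HN-type equality of Theorem \ref{th:preserveHN} this forces the successive slopes to coincide with those of the HN filtration of the limit; uniqueness of that filtration then identifies $\tilde\pi^{(i)}=\pi^{(i)}_\infty$.

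The main obstacle I anticipate is the identification step: passing both the holomorphicity condition and the new $\phi_\infty$-invariance condition through a weak $L^2$ limit, and then using the twisted Chern-Weil bound --- whose extra term $\|[\phi_t,\pi^{(i)}_t]\|^2$ has the right sign --- to rule out degree loss at infinity. This is precisely where the twisted case departs from Wilkin's untwist argument (which treats $D''=d_A''+\phi$ as a single operator), and where the modified Chern-Weil formula introduced earlier is essential to recover the same conclusion for twist Higgs pairs.
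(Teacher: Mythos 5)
Your proposal is correct and follows the same overall architecture as the paper's proof: the twisted Chern--Weil formula \eqref{eq:chernweil} gives $H^1$ control on the projections, Rellich extraction produces a strong $L^2$ limit $\tilde{\pi}^{(i)}_\infty$ which is a nested family of $\phi_\infty$-invariant holomorphic subbundles of the correct ranks and degrees, and the identification with $\pi^{(i)}_\infty$ is made through Theorem \ref{th:preserveHN}, uniqueness of the maximal destabilising Higgs subbundle, and induction on the length of the filtration. The one genuine divergence is the intermediate lemma. The paper proves the sharper statement that $\|d_{A_t}''\pi^{(i)}_t\|_{L^2}\to 0$ and $\|[\phi_t,\pi^{(i)}_t]\|_{L^2}\to 0$: because $(A_\infty,\phi_\infty)$ is a critical point, $\sqrt{-1}\Lambda\Theta_\infty$ has constant eigenvalues, so the pointwise trace inequality $\mathrm{Tr}(\mathfrak{a}\pi)\le\sum_{k\le r}\lambda_k$ combined with the equality of HN types bounds $\int_X \mathrm{Tr}(\sqrt{-1}\Lambda\Theta_\infty\,\pi^{(i)}_t)\,dvol$ above by $2\pi d_i$, forcing both nonnegative error terms in the Chern--Weil identity to vanish asymptotically; the degree identity $\deg\tilde{\pi}^{(i)}_\infty=d_i$ then follows as an equality of limits. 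You settle for uniform boundedness only and recover the degree by a two-sided squeeze: weak lower semicontinuity of the two $L^2$ norms gives $\deg\tilde{\pi}^{(i)}_\infty\ge d_i$, while the upper bound $\deg\tilde{\pi}^{(i)}_\infty\le d_i$ (which you leave implicit) comes from the maximality property of the HN filtration of the limit, whose type is $\overrightarrow{\mu}_0$ by Theorem \ref{th:preserveHN}. Both routes close the argument; the paper's version buys the extra information that the flow projections become asymptotically holomorphic and $\phi$-invariant, which it reuses in the subsequent lemma computing $\deg(\tilde{\pi}^{(i)}_\infty)$, whereas yours is slightly softer and makes the role of the HN polygon more visible. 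The remaining technical point common to both arguments --- that the weak limit of the conditions $(1-\pi)d_A''\pi=0$ and $(1-\pi)[\phi,\pi]=0$ produces an honest holomorphic subbundle rather than merely a weakly holomorphic one --- is treated at the same level of detail in your write-up as in the paper.
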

To prove this we need the following lemmas.

\begin{lemma}
$||d_{A_t}''(\pi_t^{(i)})||_{L^2} \rightarrow 0$ and $||[\phi_t,\pi_t^{(i)}]||_{L^2} \rightarrow 0$
\end{lemma}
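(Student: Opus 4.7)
The plan is to apply the Chern-Weil formula \eqref{eq:chernweil} to the projection $\pi_t^{(i)}$ at each time $t$ and take $t\to\infty$, exploiting the fact that $\sqrt{-1}\Lambda\Theta_\infty$ has a very restricted eigenvalue structure at the limit.

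First, I observe that for every finite $t$ the gradient flow acts by a complex gauge transformation, so $(E,d_{A_t}'',\phi_t)$ is isomorphic as a twist Higgs bundle to $(E,d_{A_0}'',\phi_0)$; in particular its Harder-Narasimhan filtration has the same type $\overrightarrow{\mu}_0$, and the degree $\deg(E_i)$ of the $i$-th subbundle cut out by $\pi_t^{(i)}$ is independent of $t$. Applying \eqref{eq:chernweil} to the data $(A_t,\phi_t)$ and to the rank-$r_i$ projection $\pi_t^{(i)}$ yields
\begin{equation*}
||d_{A_t}''\pi_t^{(i)}||^2 + ||[\phi_t,\pi_t^{(i)}]||^2 = \int_X Tr(\sqrt{-1}\Lambda\Theta_t\,\pi_t^{(i)})\,dvol - 2\pi\deg(E_i),
\end{equation*}
so the lemma is equivalent to the claim that the right-hand side tends to zero as $t\to\infty$.

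Next, I would use the structure of the limit together with the $L^p$ convergence of the flow. By the splitting proposition of Section 2.3 and Theorem \ref{th:preserveHN}, $\sqrt{-1}\Lambda\Theta_\infty$ is a parallel self-adjoint endomorphism whose pointwise eigenvalues, listed in decreasing order with multiplicity, are exactly the entries of $\overrightarrow{\mu}_0$. Since $\Lambda\Theta_t \to \Lambda\Theta_\infty$ in $L^2$ and $|\pi_t^{(i)}|$ is uniformly bounded, Cauchy-Schwarz gives
\begin{equation*}
\int_X Tr(\sqrt{-1}\Lambda\Theta_t\,\pi_t^{(i)})\,dvol = \int_X Tr(\sqrt{-1}\Lambda\Theta_\infty\,\pi_t^{(i)})\,dvol + o(1).
\end{equation*}
The linear-algebra bound invoked in the proof of the earlier degree-estimate proposition (the trace of a Hermitian operator against any rank-$r$ orthogonal projection is at most the sum of its top $r$ eigenvalues) then supplies the pointwise inequality $Tr(\sqrt{-1}\Lambda\Theta_\infty\,\pi_t^{(i)}) \leq \mu_1 + \cdots + \mu_{r_i}$, whose integral over $X$ equals $2\pi\deg(E_i)$.

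Combining these estimates with the Chern-Weil identity above yields
\begin{equation*}
0 \leq ||d_{A_t}''\pi_t^{(i)}||^2 + ||[\phi_t,\pi_t^{(i)}]||^2 \leq o(1)
\end{equation*}
as $t\to\infty$, forcing both $L^2$ norms to vanish in the limit. The main subtle point is verifying that the ordered eigenvalue list of $\sqrt{-1}\Lambda\Theta_\infty$ coincides with $\overrightarrow{\mu}_0$, but this is precisely the content of Theorem \ref{th:preserveHN} together with the critical-point splitting; so the real work has already been done, and what remains is the Chern-Weil bookkeeping plus the elementary trace-versus-eigenvalue inequality.
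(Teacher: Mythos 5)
Your proof is correct and follows essentially the same route as the paper: apply the Chern--Weil formula \eqref{eq:chernweil} to $\pi_t^{(i)}$, use constancy of $\deg(\pi_t^{(i)})$ under the finite-time complex gauge action, replace $\Lambda\Theta_t$ by $\Lambda\Theta_\infty$ via the convergence of the flow and the uniform bound on $\pi_t^{(i)}$, and then bound $Tr(\sqrt{-1}\Lambda\Theta_\infty\,\pi_t^{(i)})$ by the top partial sum of eigenvalues, which equals $2\pi d_i$ by the critical-point splitting and Theorem \ref{th:preserveHN}. The only cosmetic difference is that you invoke $L^2$ convergence of $\Lambda\Theta_t$ where the paper uses $C^\infty$ convergence; both suffice.
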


\begin{proof}
The Chern-Weil formula \eqref{eq:chernweil} shows that
\begin{equation}\label{eq:degreeSequence}
deg(\pi^{(i)}_t)=\frac{\sqrt{-1}}{2\pi}\int_X tr(\pi^{(i)}_t \Lambda(F_{A_t} + [\phi_t, \phi_t^{*H}])-||d_{A_t}''(\pi^{(i)})||_{L^2}^2-||[\phi_t,\pi_t^{(i)}]||_{L^2}^2
\end{equation}
Along the finite-time flow $d_i = deg(\pi^{(i)}_t)$ is fixed,
therefore we can re-write Equation \eqref{eq:degreeSequence}
\begin{equation}\label{eq:rewriteDegree}
\begin{split}
&||d_{A_t}''(\pi^{(i)})||_{L^2}^2-||[\phi_t,\pi_t^{(i)}]||_{L^2}^2\\
=& -d_i + \frac{\sqrt{-1}}{2\pi}\int_X tr(\pi^{(i)}_t \Lambda(F_{A_\infty} + [\phi_\infty, \phi_\infty^{*H}])\\
+&\frac{\sqrt{-1}}{2\pi}\int_X tr(\pi^{(i)}_t \Lambda(F_{A_t} + [\phi_t, \phi_t^{*H}]-F_{A_\infty} - [\phi_\infty, \phi_\infty^{*H}])\\
\end{split}
\end{equation}
Since $F_{A_t} + [\phi_t, \phi_t^{*H}] \rightarrow F_{A_\infty} +
[\phi_\infty, \phi_\infty^{*H}]$ in $C^\infty$ topology and
$\pi^{(i)}_t$ uniformly bounded in $L^2$ (for it is a projection)
then the last term in \eqref{eq:rewriteDegree} converges to zero.
Let $\overrightarrow{\mu}$ be the HN type of $(E,d_{A_\infty}'',
\phi_\infty)$. Since $(d_{A_\infty}'', \phi_\infty)$ is a critical
point of YMH then we also have
\begin{equation}
\frac{\sqrt{-1}}{2\pi}\int_X tr(\pi^{(i)}_t \Lambda(F_{A_\infty} + [\phi_\infty, \phi_\infty^{*H}]) \leq \sum_{k \leq rank(\pi^{(i)}_\infty)} \mu_k=d_i.
\end{equation}
Combining all of these results, we see that $||d_{A_t}''(\pi_t^{(i)})||_{L^2} \rightarrow 0$ and $||[\phi_t,\pi_t^{(i)}]||_{L^2} \rightarrow 0$.
\end{proof}

In particular, this lemma shows that $||\pi_t^{(i)}||^{H^1} \leq C$ and so there exists some $\tilde{\pi}^{(i)}_\infty$ and a subsequence $t_j$ such that $\pi_t^{(i)}\rightarrow \tilde{\pi}^{(i)}_\infty$ weakly in $H^1$ and strongly in $L^2$.

\begin{lemma}
$||d_{A_\infty}''(\pi_\infty^{(i)})||_{L^2}=0$ and $||[\phi_\infty,\pi_\infty^{(i)}]||_{L^2}=0$
\end{lemma}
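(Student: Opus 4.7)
The plan is to pass to the limit in the identities from the previous lemma, using the combination of weak $H^1$ / strong $L^2$ convergence of the projections together with the $C^\infty$ convergence of $(A_t,\phi_t)\to (A_\infty,\phi_\infty)$ (which is part of the convergence package for the YMH flow cited earlier in the paper).

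First, I would split off the dependence on the connection: for any endomorphism-valued $\pi$ one has $d_{A_\infty}''\pi = d_{A_t}''\pi + [A_\infty'' - A_t'',\pi]$. Applying this with $\pi = \pi_{t_j}^{(i)}$ gives
\begin{equation*}
d_{A_\infty}''\pi_{t_j}^{(i)} = d_{A_{t_j}}''\pi_{t_j}^{(i)} + [A_\infty'' - A_{t_j}'',\,\pi_{t_j}^{(i)}].
\end{equation*}
The first term on the right tends to $0$ in $L^2$ by the previous lemma, and the second term is controlled in $L^2$ by $\|A_\infty - A_{t_j}\|_{L^\infty}\|\pi_{t_j}^{(i)}\|_{L^2}$, which tends to $0$ since $A_{t_j}\to A_\infty$ smoothly and $\pi_{t_j}^{(i)}$ is a uniformly bounded projection. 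Hence $d_{A_\infty}''\pi_{t_j}^{(i)}\to 0$ strongly in $L^2$.

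On the other hand, since $\pi_{t_j}^{(i)}\rightharpoonup \tilde\pi_\infty^{(i)}$ weakly in $H^1$, the fixed first-order operator $d_{A_\infty}''$ is bounded from $H^1$ to $L^2$, so $d_{A_\infty}''\pi_{t_j}^{(i)} \rightharpoonup d_{A_\infty}''\tilde\pi_\infty^{(i)}$ weakly in $L^2$. By lower semicontinuity of the $L^2$ norm under weak convergence,
\begin{equation*}
\|d_{A_\infty}''\tilde\pi_\infty^{(i)}\|_{L^2} \leq \liminf_{j\to\infty}\|d_{A_\infty}''\pi_{t_j}^{(i)}\|_{L^2} = 0,
\end{equation*}
which yields the first statement (after identifying $\tilde\pi_\infty^{(i)}$ with $\pi_\infty^{(i)}$, which is justified in the succeeding part of the proof of Proposition \ref{pr:limitHNfiltration}).

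For the Higgs commutator the argument is in fact simpler because $[\phi,\pi]$ is zeroth order in $\pi$. Write
\begin{equation*}
[\phi_\infty,\pi_{t_j}^{(i)}] = [\phi_{t_j},\pi_{t_j}^{(i)}] + [\phi_\infty - \phi_{t_j},\,\pi_{t_j}^{(i)}];
\end{equation*}
the first term tends to $0$ in $L^2$ by the previous lemma and the second by $C^0$ convergence $\phi_{t_j}\to \phi_\infty$ together with the uniform $L^2$ bound on $\pi_{t_j}^{(i)}$. Since the strong $L^2$ convergence $\pi_{t_j}^{(i)} \to \tilde\pi_\infty^{(i)}$ together with boundedness of $\phi_\infty$ gives $[\phi_\infty,\pi_{t_j}^{(i)}] \to [\phi_\infty,\tilde\pi_\infty^{(i)}]$ in $L^2$, we conclude $[\phi_\infty,\tilde\pi_\infty^{(i)}] = 0$ in $L^2$. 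The only delicate point is keeping track of the topology of convergence: the $H^1$ bound on the $\pi_t^{(i)}$ (coming from $\|d_{A_t}''\pi_t^{(i)}\|_{L^2}\to 0$ and $\|\pi_t^{(i)}\|_{L^\infty}\leq 1$) is what provides the weak $H^1$ limit along a subsequence, and the $C^\infty$ convergence of $(A_t,\phi_t)$ is what lets one replace $A_t,\phi_t$ by $A_\infty,\phi_\infty$ inside the first-order operator and the bracket without spoiling the vanishing estimates; the rest is just lower semicontinuity.
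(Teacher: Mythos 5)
Your proof is correct and follows essentially the same route as the paper: decompose $d_{A_\infty}''\pi_{t_j}^{(i)}$ into $d_{A_{t_j}}''\pi_{t_j}^{(i)}$ plus a commutator with $A_\infty''-A_{t_j}''$, kill both terms using the previous lemma and the smooth convergence of the flow, then pass to the weak $H^1$ limit via lower semicontinuity (and similarly, more easily, for the zeroth-order bracket with $\phi$). Your write-up is in fact more careful than the paper's, in particular in flagging that the conclusion is first obtained for $\tilde\pi_\infty^{(i)}$ and only later identified with $\pi_\infty^{(i)}$.
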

\begin{proof}
For $||d_{A_\infty}''(\pi_{t_j}^{(i)})||_{L^2} \leq ||d_{A_\infty}''(\pi_{t_j}^{(i)})-d_{A_{t_j}}''(\pi_\infty^{(i)})||_{L^2} +||d_{A_{t_j}}''(\pi_{t_j}^{(i)})||_{L^2}$, by the continuously convergence of the gradient flow and previous lemma, $||d_{A_\infty}''(\pi_{t_j}^{(i)})||_{L^2} \rightarrow 0$. Since $\pi^{(i)}_{t_j} \rightarrow \tilde{\pi}^{(i)}_\infty$ weakly in $H^1$ then $||d_{A_\infty}''(\pi_\infty^{(i)})||_{L^2}=0$. The proof of $||[\phi_\infty,\pi_\infty^{(i)}]||_{L^2}=0$ is similar.
\end{proof}

This lemma implies that $\pi_\infty^{(i)}$ is indeed a $\phi_\infty$
invariant split Higgs bundle.
\begin{lemma}
$deg(\tilde{\pi}^{(i)}_\infty)=deg(\pi^{(i)}_\infty)$.
\end{lemma}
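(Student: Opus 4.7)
The plan is to apply the Chern–Weil formula \eqref{eq:chernweil} twice: once to $\tilde{\pi}^{(i)}_\infty$ with respect to the limiting data $(A_\infty,\phi_\infty)$, and once (in the limit along the subsequence $t_j$) to $\pi^{(i)}_{t_j}$ with respect to $(A_{t_j},\phi_{t_j})$. Since the degree $d_i := \deg(\pi^{(i)}_{t})$ is constant along the finite-time flow (it is just a complex gauge transform of a fixed holomorphic sub-bundle), showing $\deg(\tilde{\pi}^{(i)}_\infty) = d_i$ will give the desired equality once we invoke Theorem \ref{th:preserveHN} to identify $d_i$ with $\deg(\pi^{(i)}_\infty)$.

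First, I would write
\begin{equation*}
\deg(\tilde{\pi}^{(i)}_\infty)=\frac{1}{2\pi}\int_X \mathrm{Tr}(\sqrt{-1}\Lambda \Theta_{(A_\infty,\phi_\infty)}\,\tilde{\pi}^{(i)}_\infty)\,dvol-\frac{1}{2\pi}\|d_{A_\infty}''\tilde{\pi}^{(i)}_\infty\|^2-\frac{1}{2\pi}\|[\phi_\infty,\tilde{\pi}^{(i)}_\infty]\|^2,
\end{equation*}
and observe that the last two terms vanish by the immediately preceding lemma. Next, for each $t_j$ the Chern–Weil formula gives
\begin{equation*}
d_i = \frac{1}{2\pi}\int_X \mathrm{Tr}(\sqrt{-1}\Lambda \Theta_{(A_{t_j},\phi_{t_j})}\,\pi^{(i)}_{t_j})\,dvol-\frac{1}{2\pi}\|d_{A_{t_j}}''\pi^{(i)}_{t_j}\|^2-\frac{1}{2\pi}\|[\phi_{t_j},\pi^{(i)}_{t_j}]\|^2.
\end{equation*}
The two error norms tend to zero by the first lemma, so after passing to the subsequence one is left with the integral term. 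Since $\Theta_{(A_{t_j},\phi_{t_j})} \to \Theta_{(A_\infty,\phi_\infty)}$ in the $C^\infty$ topology and $\pi^{(i)}_{t_j} \to \tilde{\pi}^{(i)}_\infty$ strongly in $L^2$ (with uniform $L^\infty$ bound, being a projection), the product converges in $L^1$ and the trace-integral passes to the limit, yielding
\begin{equation*}
d_i = \frac{1}{2\pi}\int_X \mathrm{Tr}(\sqrt{-1}\Lambda \Theta_{(A_\infty,\phi_\infty)}\,\tilde{\pi}^{(i)}_\infty)\,dvol = \deg(\tilde{\pi}^{(i)}_\infty).
\end{equation*}

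Finally, Theorem \ref{th:preserveHN} identifies the HN type of $(E,d_{A_\infty}'',\phi_\infty)$ with that of $(E,d_{A_0}'',\phi_0)$, and in particular $\deg(\pi^{(i)}_\infty)=d_i$; combined with the above, this gives $\deg(\tilde{\pi}^{(i)}_\infty)=\deg(\pi^{(i)}_\infty)$. The main technical worry is the legitimacy of applying Chern–Weil to $\tilde{\pi}^{(i)}_\infty$, which is only known to lie weakly in $H^1$; here one uses the regularity for weakly holomorphic projections (in the spirit of Uhlenbeck–Yau and Simpson) to conclude that $\tilde{\pi}^{(i)}_\infty$ is in fact a smooth $\phi_\infty$-invariant holomorphic sub-bundle, so that its degree is well-defined and the Chern–Weil formula is genuine. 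The remaining point is that the $C^\infty$ convergence of the curvature combined with mere $L^2$ convergence of the projections suffices to pass the integrand to the limit, because the curvature factor provides all needed regularity uniformly.
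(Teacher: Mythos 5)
Your argument is essentially identical to the paper's: both apply the Chern--Weil formula \eqref{eq:chernweil} to $\tilde{\pi}^{(i)}_\infty$ at the limit (where the correction terms vanish by the preceding lemma), pass the trace-integral to the limit along $t_j$ using $C^\infty$ convergence of $\Theta$ and $L^2$ convergence of the projections, identify the result with the constant degree $d_i$ since the error norms tend to zero, and finally invoke Theorem \ref{th:preserveHN} to equate $d_i$ with $\deg(\pi^{(i)}_\infty)$. Your additional remark on the regularity of the weakly $H^1$ limit $\tilde{\pi}^{(i)}_\infty$ (via Uhlenbeck--Yau type results) is a legitimate technical point that the paper leaves implicit.
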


\begin{proof}
The previous lemma and Equation \eqref{eq:degreeSequence} show that
\begin{equation*}
\begin{split}
deg(\tilde{\pi}^{(i)}_\infty)=&\frac{\sqrt{-1}}{2\pi}\int_X tr(\tilde{\pi}^{(i)}_t \Lambda(F_{A_\infty} + [\phi_\infty, \phi_\infty^{*H}])\\
=&\lim_{j\rightarrow\infty}\frac{\sqrt{-1}}{2\pi}\int_X tr(\pi^{(i)}_{t_j} \Lambda(F_{A_{t_j}} + [\phi_{t_j}, \phi_{t_j}^{*H}])\\
=&\lim_{j\rightarrow\infty} (||d_{A_{t_j}}''(\pi_{t_j}^{(i)})||_{L^2}^2+||[\phi_{t_j},\pi_{t_j}^{(i)}]||_{L^2}^2)+ deg(\pi_{t_j}^{(i)})\\
=&deg(\pi^{(i)}_\infty)
\end{split}
\end{equation*}
where in the last step we use the result of Theorem
\ref{th:preserveHN} that the type of HN filtration is preserved in
the limit.
\end{proof}

Notice that $\pi^{(i)}_{t_j}$ are all orthogonal projection with
constant rank, the limit $\tilde{\pi}^{(i)}_\infty$ must has the
same rank. For $i = 1$, $\pi^{(1)}_\infty$ is the maximal
destabilising semistable Higgs sub-bundle of $(E, d_{A_\infty}'',
\phi_\infty)$, which is the unique Higgs sub-bundle of this degree
and rank. Therefore $\pi^{(1)}_\infty=\tilde{\pi}^{(1)}_\infty$.
Proceeding by induction on the HN filtration as in \cite{DW2004}, we
can show all $\pi^{(i)}_\infty$ and $\tilde{\pi}^{(i)}_\infty$ are
the same, then Proposition \ref{pr:limitHNfiltration} follows. This
means that not only the type of HN filtration but also the HN
filtration itself is preserved by the gradient flow.

The same argument applies to the Seshadri filtration of a semistable
Higgs bundle, except that because of the lack of uniqueness of the
Seshadri filtration we can only conclude that the degree and rank of
the limiting sub-bundle are the same. There must be another way to
identify the stable Higgs bundles in the HNS filtration and
$(E,d_{A_\infty}'', \phi_\infty)$.

Fix $S$ to be the first term in the Harder-Narasimhan-Seshadri
filtration of $(E,d_{A_0}'',\phi_0)$. Following
Donaldson\cite{Don85anti}, if there is a nontrivial holomorphic map
from $S$ to $(E,d_{A_\infty}'', \phi_\infty)$, we can apply the
basic principle that a nontrivial holomorphic map between stable
bundles of the same rank and degree must be an isomorphism. Denote
$(A_{t_j}, \phi_{t_j})$ by $(A_j,\phi_j)$, and let $g_j$ be the
complex gauge transformation such that $(A_j,\phi_j)=g_j(A_0,
\phi_0)$.  Let $f_0: S \rightarrow E$ be the $\phi_0$-invariant
holomorphic inclusion, define the map $f_j : S_j \rightarrow E$ by
$f_j=g_j \circ f_0$. It is easy to check that $f_j$ is a
$\phi$-invariant holomorphic bundle map from $(S, d_{A_0}'',\phi_0)$
to $(E, d_{A_j}'',\phi_j)$, here $\phi$-invariant means $f_j \circ
\phi_0 = \phi_j \circ f_j$. Then we have

\begin{lemma}
Up to a subsequence, $f_j$ converges in $C^\infty$ to some nonzero $\phi$-invariant
holomorphic map $f_\infty$.
\end{lemma}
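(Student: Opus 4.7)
\medskip

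The plan is to combine standard elliptic regularity for $\bar{\partial}$-type operators with a Donaldson-style normalization argument. Each $f_j\colon S\to E$ is $\phi$-invariant and holomorphic with respect to $(A_j,\phi_j)$, i.e.\ $d_{A_j}''f_j=0$ and $f_j\circ\phi_0=\phi_j\circ f_j$. Since $(A_j,\phi_j)\to(A_\infty,\phi_\infty)$ in $C^\infty$, we can rewrite
\begin{equation*}
d_{A_\infty}''f_j=(A_\infty''-A_j'')\cdot f_j,
\end{equation*}
so each $f_j$ is ``almost holomorphic'' with respect to the limiting connection, with error controlled by the $C^\infty$ convergence of $A_j$.

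First, I would normalize by setting $\tilde{f}_j:=f_j/c_j$ with $c_j:=\|f_j\|_{L^2}$, so that $\|\tilde{f}_j\|_{L^2}=1$. (If $c_j$ already stays bounded away from $0$ and $\infty$ one works with $f_j$ directly; otherwise this rescaling is the whole point of the argument, as it is exactly what prevents the subsequential limit from being trivially zero.) The normalized maps still satisfy the linear equations $d_{A_j}''\tilde{f}_j=0$ and $\tilde{f}_j\circ\phi_0=\phi_j\circ\tilde{f}_j$, and in particular
\begin{equation*}
d_{A_\infty}''\tilde{f}_j=(A_\infty''-A_j'')\cdot \tilde{f}_j.
\end{equation*}

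Next, I would apply the standard elliptic estimate for the first-order elliptic operator $d_{A_\infty}''$ on $\mathrm{Hom}(S,E)$,
\begin{equation*}
\|\tilde{f}_j\|_{H^{k+1}}\leq C_k\bigl(\|d_{A_\infty}''\tilde{f}_j\|_{H^k}+\|\tilde{f}_j\|_{L^2}\bigr),
\end{equation*}
where the constant $C_k$ depends only on the fixed limiting connection and hence is uniform in $j$. Starting from the $L^2$ bound $\|\tilde{f}_j\|_{L^2}=1$ and using the $C^\infty$ smallness of $A_\infty''-A_j''$, an induction on $k$ yields uniform bounds $\|\tilde{f}_j\|_{H^k}\leq C_k'$ for every $k$. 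By Rellich--Kondrachov compactness and a diagonal subsequence argument, I extract a subsequence $\tilde{f}_{j_l}\to f_\infty$ strongly in $C^\infty$. Passing to the limit in the two defining equations gives $d_{A_\infty}''f_\infty=0$ and $f_\infty\circ\phi_0=\phi_\infty\circ f_\infty$, and the $L^2$-normalization ensures $\|f_\infty\|_{L^2}=1$, so $f_\infty\neq 0$.

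The main obstacle is ensuring that the limiting map is nontrivial: a priori the $f_j=g_j\circ f_0$ could degenerate because the complex gauge transformations $g_j$ need not stay bounded, and without control one could end up with $f_\infty\equiv 0$. The normalization by $\|f_j\|_{L^2}$ is precisely the device that rules this out; every other step (holomorphicity and $\phi$-compatibility of the limit, $C^\infty$ convergence) is a routine consequence of elliptic bootstrapping together with the $C^\infty$ convergence of the flow data $(A_j,\phi_j)\to(A_\infty,\phi_\infty)$ already established in Section~2.
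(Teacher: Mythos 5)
Your argument is correct and is essentially the standard Donaldson-style proof that the paper defers to \cite{LZ2011} for: normalize $f_j$ in $L^2$, observe that $d_{A_\infty}''\tilde{f}_j=(A_\infty''-A_j'')\cdot\tilde{f}_j$ is small, bootstrap with elliptic estimates to get uniform $H^k$ bounds, extract a $C^\infty$-convergent subsequence, and use the preserved normalization to rule out $f_\infty\equiv 0$. The only point worth flagging is that the lemma's conclusion should be read as applying to the rescaled maps (since the $g_j$ need not stay bounded), which you correctly identify as the crux of the argument.
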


Reader can refer to \cite{LZ2011} for the details of the proof.

If $\pi_j$ denotes the projection to $f_j(S)$, then as mentioned under the proof of Proposition \ref{pr:limitHNfiltration}, $\pi_j \rightarrow \pi_\infty$ weakly in $H^1$ and strongly in $L^2$, where $\pi_\infty$ is a subbundle of the same rank and degree as $S$. Denote $S_\infty = \pi_\infty(E)$, then $f_\infty$ is in effect a map from $(S, d_{A_0}'',\phi_0)$ to $(S_\infty, d_{A_\infty}'',\phi_\infty)$.

A prior, $f_\infty: S \rightarrow S_\infty$ could be any bad, but we
have the following lemma completely analogous to the proof of
(V.7.11) in \cite{Ko87differential} for holomorphic bundles and so
the proof is omitted.

\begin{lemma}
Let $(S_1, \bar{\partial}_1, \phi_1)$ be a stable Higgs bundle, and let $(S_2, \bar{\partial}_2, \phi_2)$ be a semistable Higgs bundle over a compact Riemann surface X. Also suppose that $\frac{deg(S_1)}{rank(S_1)}= \frac{deg(S_2)}{rank(S_2)}$, and let $f : S_1 \rightarrow S_2$ be a holomorphic map satisfying $f \circ \phi_1 = \phi_2 \circ f$. Then either $f = 0$ or $f$ is injective.
\end{lemma}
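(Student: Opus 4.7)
The plan is to assume $f \neq 0$ and derive a contradiction from the hypothesis that $\ker f$ is nontrivial, using stability of $(S_1, \bar{\partial}_1, \phi_1)$ to bound the slope of $\ker f$ from above and semistability of $(S_2, \bar{\partial}_2, \phi_2)$ to force the slope of the saturated image of $f$ to be too large. The intertwining relation $f \circ \phi_1 = \phi_2 \circ f$ is precisely what promotes the kernel and image of $f$ to Higgs subbundles, so the whole argument is a $\phi$-invariant transcription of the classical statement (V.7.11) in \cite{Ko87differential}.

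First, set $K := \ker f$. Since $X$ has complex dimension one, every coherent subsheaf of a locally free sheaf is torsion-free and hence locally free, so $K$ is automatically a holomorphic subbundle of $S_1$, and it is saturated as a kernel. For a local section $s$ of $K$ the relation $f(\phi_1 s) = \phi_2(f s) = 0$ shows that $\phi_1(s)$ is annihilated by $f$, hence $K$ is $\phi_1$-invariant. Assume for contradiction that $f \neq 0$ and $K \neq 0$; then $0 \subsetneq K \subsetneq S_1$ is a proper $\phi_1$-invariant subbundle, and stability of $S_1$ gives $\mu(K) < \mu(S_1)$. Additivity of degree in the short exact sequence $0 \to K \to S_1 \to S_1/K \to 0$ then yields
\begin{equation*}
\mu(S_1/K) > \mu(S_1) = \mu(S_2).
\end{equation*}

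Next, $f$ factors through an injection of sheaves $S_1/K \hookrightarrow S_2$; let $I \subset S_2$ be the saturation of this image. Then $I$ is a holomorphic subbundle of $S_2$ of the same rank as $S_1/K$, and saturating can only enlarge the degree, so $\deg(I) \geq \deg(S_1/K)$ and hence $\mu(I) \geq \mu(S_1/K) > \mu(S_2)$. The image of $f$ is $\phi_2$-invariant by the intertwining relation, and saturating inside $S_2$ preserves this invariance, so $I$ is a $\phi_2$-invariant subbundle of $S_2$. Semistability of $S_2$ then forces $\mu(I) \leq \mu(S_2)$, a contradiction. Hence $K = 0$ and $f$ is injective.

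The main obstacle is bookkeeping rather than an analytic difficulty: one must verify that at each step (kernel, image, saturation) the resulting subsheaf is an honest holomorphic subbundle and that $\phi$-invariance is inherited at every stage. This is where the assumption that $X$ is a Riemann surface enters crucially, since on a curve torsion-freeness suffices for local freeness and kernels/images of bundle maps behave well. Once these checks are in place, the proof is a word-for-word translation of the holomorphic bundle version, which is why the author simply defers to (V.7.11) of \cite{Ko87differential}.
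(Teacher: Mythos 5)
Your proof is correct and is exactly the argument the paper has in mind: the paper omits the proof entirely, deferring to (V.7.11) of \cite{Ko87differential}, and your write-up is precisely the $\phi$-invariant transcription of that classical argument, with the kernel/image/saturation bookkeeping (local freeness on a curve, inheritance of $\phi$-invariance under saturation) carried out correctly. One optional remark: if one also wants $f$ to be injective fiberwise (as is implicitly used later when the paper upgrades $f_\infty$ to an isomorphism), the same slope comparison in the case $\ker f=0$ forces $\mu(I)=\mu(S_2)$ and hence the image to be already saturated, but this is not needed for the statement as written.
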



For the gradient flow preserve HN filtration, $(S_\infty,
d_{A_\infty}'', \phi_\infty)$ still lies in the maximal
destablishing semistable Higgs subbundle and has the highest slope
in the HN filtration. Hence $(S_\infty, d_{A_\infty}'',
\phi_\infty)$ can not admit any subbundle with higher slope, must be
semi-stable. By above lemma, $f_\infty$ is injective. But $S_\infty$
has the same rank with $S$, then $f_\infty$ must be an isomorphism,
and $S_\infty=f_\infty(S)$ is a stable factor in the split Higgs
bundle $(E, d_{A_\infty}'', \phi_\infty)$. Relabel $f_\infty$ as
$f_\infty^{(1)(1)}$ to represent the highest factor in the HNS
filtration. Now we construct a isomorphism form $S$ in
$Gr^{hns}(E,d_{A_0}'',\phi_0)$ to $S_\infty$ in
$(E,d_{A_\infty}'',\phi_\infty)$. To build the entire isomorphism
from $Gr^{hns}(E,d_{A_0}'',\phi_0)$ to
$(E,d_{A_\infty}'',\phi_\infty)$, we make induction on the length of
the HNS filtration. Let $Q=E/S$, we have
$Gr^{hns}(E,d_{A_0}'',\phi_0) = S \oplus
Gr^{hns}(E,d_{A_0}''{^Q},\phi_0^Q)$, and
$(E,d_{A_\infty}'',\phi_\infty)=S_\infty \oplus Q_\infty$. Follow
the discussion in \cite{LZ2011}, we can prove that $Q_\infty \cong
Gr^{hns}(E,d_{A_0}''{^Q},\phi_0^Q)$. Similarly, there is a map
$f_\infty^{(1)(2)}$ from $S' \subset Q$ to $S'_\infty \subset Q$.
Repeat this procedure, there exists a isomorphism
$\{f_\infty^{(i)(j)}\}$ identifying $Gr^{hns}(E,d_{A_0}'',\phi_0)$
with $(E,d_{A_\infty}'',\phi_\infty)$.

Finally, the limit $\{f_\infty^{(i)(j)}\}$ exists along the flow
independently of the subsequence chosen, then we complete the proof
of Theorem \ref{th:convergeGR}.

\noindent \textbf{Remark:} The fixed Hermitian metric $H$ on $E$ for
the YMH flow is arbitrary, but Theorem \ref{th:convergeGR} informs
us the limit does not depend on the choice of metric.

\bibliographystyle{alpha}
\bibliography{higgs}

\begin{thebibliography}{BDGPW95}

\bibitem[AB83]{AB83}
M.F. Atiyah and R.~Bott.
\newblock The {Y}ang-{M}ills equations over riemann surfaces.
\newblock {\em Philosophical Transactions of the Royal Society of London.
  Series A, Mathematical and Physical Sciences}, pages 523--615, 1983.

\bibitem[BDGPW95]{bradlow1995stable}
S.~Bradlow, G.~Daskalopoulos, O.~Garc{\i}a-Prada, and R.~Wentworth.
\newblock {\em Stable augmented bundles over Riemann surfaces}, chapter Vector
  Bundles in Algebraic Geometry, pages 15--77.
\newblock Cambridge University Press, Cambridge, 1995.

\bibitem[CJ12]{CollinsJacob2012YangMills}
T.~{Collins} and A.~{Jacob}.
\newblock {On the bubbling set of the Yang-Mills flow on a compact K{\"a}hler
  manifold}.
\newblock {\em ArXiv e-prints}, June 2012.

\bibitem[Don85]{Don85anti}
S.K. Donaldson.
\newblock Anti self-dual {Y}ang-{M}ills connections over complex algebraic
  surfaces and stable vector bundles.
\newblock {\em Proceedings of the London Mathematical Society}, 50(1):1--26,
  1985.

\bibitem[DW04]{DW2004}
G.~Daskalopoulos and R.~Wentworth.
\newblock Covergence properties of the yang-mills flow on k{\"a}hler surfaces.
\newblock {\em J. Reine Angew. Math.}, 575:69--99, 2004.

\bibitem[Hit87]{Hit87self}
N.~Hitchin.
\newblock The self-duality equations on a {R}iemann surface.
\newblock {\em Proc. London Math. Soc.}, 55(3):59--126, 1987.

\bibitem[HT03]{HT03mirror}
T.~Hausel and M.~Thaddeus.
\newblock Mirror symmetry, {L}anglands duality, and the {H}itchin system.
\newblock {\em Inventiones Mathematicae}, 153(1):197--229, 2003.

\bibitem[Kob87]{Ko87differential}
S.~Kobayashi.
\newblock {\em Differential geometry of complex vector bundles}.
\newblock Iwanami Shoten, 1987.

\bibitem[LZ11]{LZ2011}
Jiayu Li and Xi~Zhang.
\newblock The gradient flow of higgs pairs.
\newblock {\em Journal of the European Mathematical Society}, 13(5):1373--1422,
  2011.

\bibitem[Nit91]{nit91moduli}
N.~Nitsure.
\newblock Moduli space of semistable pairs on a curve.
\newblock {\em Proceedings of the London Mathematical Society}, 3(2):275--300,
  1991.

\bibitem[{Sib}12]{Sibley2012YangMills}
B.~{Sibley}.
\newblock {Asymptotics of the Yang-Mills Flow for Holomorphic Vector Bundles
  Over K{\"a}hler Manifolds: The Canonical Structure of the Limit}.
\newblock {\em ArXiv e-prints}, June 2012.

\bibitem[Sim88]{Sim88constructing}
C.T. Simpson.
\newblock Constructing variations of {H}odge structure using {Y}ang-{M}ills
  theory and applications to uniformization.
\newblock {\em J. Amer. Math. Soc}, 1, 1988.

\bibitem[{Wil}06]{Wil06morse}
G.~{Wilkin}.
\newblock {Morse Theory for the Space of Higgs Bundles}.
\newblock {\em ArXiv Mathematics e-prints}, November 2006.

\bibitem[WZ11]{WZ11twisted}
Yue Wang and Xi~Zhang.
\newblock Twisted holomorphic chains and vortex equations over non-compact
  {K}{\"a}hler manifolds.
\newblock {\em Journal of Mathematical Analysis and Applications},
  373(1):179--202, 2011.

\end{thebibliography}

\

Department of Mathematics, South China University of Technology,
Guangzhou, 510641, P.R.China.

\emph{E-mail Address:} sczhangw@scut.edu.cn

\

\end{document}